\newtheorem{proposition}{Proposition}[section]
\newtheorem{corollary}[proposition]{Corollary}
\newtheorem{theorem}[proposition]{Theorem}
\theoremstyle{definition}
\newtheorem{definition}[proposition]{Definition}
\newtheorem{example}[proposition]{Example}
\newtheorem{examples}[proposition]{Examples}
\newtheorem{remark}[proposition]{Remark}
\newcommand{\thlabel}[1]{\label{th:#1}}
\newcommand{\thref}[1]{Theorem~\ref{th:#1}}
\newcommand{\selabel}[1]{\label{se:#1}}
\newcommand{\seref}[1]{Section~\ref{se:#1}}
\newcommand{\prlabel}[1]{\label{pr:#1}}
\newcommand{\prref}[1]{Proposition~\ref{pr:#1}}
\newcommand{\colabel}[1]{\label{co:#1}}
\newcommand{\coref}[1]{Corollary~\ref{co:#1}}
\newcommand{\delabel}[1]{\label{de:#1}}
\newcommand{\deref}[1]{Definition~\ref{de:#1}}
\newcommand{\eqlabel}[1]{\label{eq:#1}}
\newcommand{\equref}[1]{(\ref{eq:#1})}
\def\ot{\otimes}
\def\NN{{\mathbb N}}
\def\CC{{\mathbb C}}
\newcommand{\Cc}{\mathcal{C}}
\newcommand{\Ll}{\mathcal{L}}
\newcommand{\Mm}{\mathcal{M}}
\def\*C{{}^*\hspace*{-1pt}{\Cc}}
\def\text#1{{\rm {\rm #1}}}
\begin{document}

\title[Universal modules]
{Functors between representation categories. Universal modules}

\author{A. L. Agore}
\address{Simion Stoilow Institute of Mathematics of the Romanian Academy, P.O. Box 1-764, 014700 Bucharest, Romania}
\address{Vrije Universiteit Brussel, Pleinlaan 2, B-1050 Brussels, Belgium}
\email{ana.agore@gmail.com, ana.agore@vub.be}

\thanks{This work was supported by a grant of Romanian Ministry of Research, Innovation and Digitization, CNCS/CCCDI --
UEFISCDI, project number PN-III-P4-ID-PCE-2020-0458, within PNCDI III}

\subjclass[2010]{16D90, 16T05, 17A32, 17B10}
\keywords{universal module}


\maketitle

\begin{abstract}
Let $\mathfrak{g}$ and $\mathfrak{h}$ be two Lie algebras with $\mathfrak{h}$ finite dimensional and consider ${\mathcal A} = {\mathcal A} (\mathfrak{h}, \, \mathfrak{g})$ to be the corresponding universal algebra as introduced in \cite{am20}. Given an ${\mathcal A}$-module $U$ and a Lie $\mathfrak{h}$-module $V$ we show that $U \ot V$ can be naturally endowed with a Lie $\mathfrak{g}$-module structure. This gives rise to a functor between the category of Lie $\mathfrak{h}$-modules and the category of Lie $\mathfrak{g}$-modules and, respectively, to a functor between the category of ${\mathcal A}$-modules and the category of Lie $\mathfrak{g}$-modules. Under some finite dimensionality assumptions, we prove that the two functors admit left adjoints which leads to the construction of universal ${\mathcal A}$-modules and universal Lie $\mathfrak{h}$-modules as the representation theoretic counterparts of Manin-Tambara's universal coacting objects \cite{Manin, Tambara}. 
\end{abstract}

\section*{Introduction}
The universal coacting bialgebra/Hopf algebra on a finite dimensional (graded) associative algebra originates in the work of Yu. I. Manin (\cite{Manin}). The importance of this construction became obvious mostly due to its interaction with non-commutative geometry where it is seen as some sort of symmetry group (see \cite{theo} for more details on this view point). The non-graded version of this construction appeared a few years later in a paper by D. Tambara (\cite{Tambara}). However, as remarked in \cite{Tambara}, the universal coacting bialgebra is in fact the dual of the so-called universal measuring bialgebra introduced by M.E. Sweedler in \cite{Sw}. We should note that, unlike Manin-Tambara's construction, Sweedler's universal measuring bialgebra/Hopf algebra exists even in the infinite-dimensional case.

In recent years, universal (co)acting objects have been considered in various settings and for different purposes. For instance, \cite{cris} extends Sweedler's construction to monoids in a braided monoidal category. On the other hand, the Manin-Tambara construction was introduced in the setting of Poisson algebras (\cite{ana2021}), finite index-subfactors (\cite{alex1}), superpotential algebras (\cite{alex2}), polynomial algebras (\cite{Taft}), bialgebroids (\cite{ard}) or Lie/Leibniz algebras (\cite{am20}). The corresponding universal coacting bialgebras/Hopf algebras, which in certain cases carry some extra structure (e.g. a Poisson Hopf algebra structure as in \cite{ana2021}), seem to play a prominent role in solving other seemingly unrelated problems such as the classification of gradings on various kinds of algebras (\cite{am20, mil}), the description of the automorphisms group of certain algebraic structures (\cite{am20}) and even in quantum Galois theory (\cite{alex1}). Another related universal (co)acting construction was considered in \cite{AGV1} as the Hopf algebraic analogue of the universal group of a grading and its connections to the problem of classifying Hopf algebra coactions have been highlighted.

One of the most general constructions of universal (co)acting bialgebras/Hopf algebras, performed in the setting of $\Omega$-algebras, was introduced in \cite{AGV2} together with generalized duality results. Necessary and sufficient conditions for the existence of universal coacting bialgebras/Hopf algebras are provided, explaining in this general setting the need for assuming finite dimensionality in both Manin and Tambara's papers.

It is worth to point out that both Sweedler and Manin-Tambara's constructions have a categorical interpretation. More precisely, for Tambara's construction one considers the left adjoint, say $a(A, - )$, of the tensor product endofunctor $A \ot - $ on the category of $k$-algebras, where $A$ is a finite dimensional associative algebra. Tambara's universal coacting bialgebra is precisely $a(A, A)$ which turns out to be naturally endowed with a bialgebra structure. Similarly, for an arbitrary associative algebra $A$, it can be proved that the contravariant functor ${\rm Hom}(-,\, A)$ taking coalgebras to (convolution) algebras has a right adjoint which hereafter we denote by $M(A,\,-)$. As before, Sweedler's universal measuring bialgebra is exactly $M(A,\,A)$ which again has a bialgebra structure.  

In this paper we deal with the representation theoretic version of Manin-Tambara's construction in the Lie algebra setting. Our approach is a categorical one. More precisely, given two fixed Lie algebras $\mathfrak{g}$ and $\mathfrak{h}$, with $\mathfrak{h}$ finite dimensional, and the corresponding universal algebra ${\mathcal A} = {\mathcal A} (\mathfrak{h}, \, \mathfrak{g})$ as defined in \cite{am20}, we first show that the tensor product between an ${\mathcal A}$-module $U$ and a Lie $\mathfrak{h}$-module $V$ can be endowed with a Lie $\mathfrak{g}$-module structure (\thref{tensor_f}). As a consequence, we are able to construct two ''tensor product'' functors between the categories of Lie modules over $\mathfrak{h}$ and $\mathfrak{g}$ and respectively between the category of ${\mathcal A}$-modules and the category of Lie $\mathfrak{g}$-modules. Under the appropriate  finite dimensionality assumptions, the two functors mentioned above are proved to admit left adjoints. These left adjoints are given precisely by what we have called the universal Lie $\mathfrak{h}$-module and the universal ${\mathcal A}$-module, respectively. The two universal modules are introduced in a constructive manner in \thref{exist1} and \thref{exist2}. These are the counterparts for Lie and associative representations of Manin-Tambara's constructions.

Furthermore, the two aforementioned pairs of adjoint functors allow us to travel both ways between the representation categories of different algebraic structures, such as Lie and associative algebras, and to transfer certain properties which are usually preserved by left/right adjoints.

\section{Preliminaries}\selabel{prel}

This section will be used mostly as an opportunity to fix some notation and to provide certain useful references. Let us start with a few words on notation.
\subsection{Notational conventions}
All vector spaces, (bi)linear maps, unadorned tensor products, Lie or associative
algebras, bialgebras and so on are over an arbitrary commutative field $k$. All (co)associative (co)algebras are assumed to be (co)unital. The notation employed for coalgebras is standard: $\Delta$ stands for the comultiplication and $\varepsilon$ for the counit. We use Sweedler's notation with implied summation for both coalgebras (resp. bialgebras), as in $\Delta(c) = c_{(1)} \ot c_{2}$, and for comodule structures: a right $C$-comodule structure $\rho$ on a vector space $V$ will be denoted by $\rho(v) = v_{(0)} \ot v_{(1)}$. When we need to be precise, the structures involved will be adorned. $\delta_{ij}$ denotes Kronecker's symbol while ${\rm Id}_{X}$ stands for the identity map on the set $X$.

In the sequel, $k [X_{si} \, | s = 1, \cdots, n, \, i\in I]$
denotes the usual polynomial algebra on variables $X_{si}$.
We shall denote by ${\rm Lie}_k$ and ${\rm
ComAlg}_k$ the categories of Lie and commutative associative algebras, respectively. Given an associative algebra $A$ and a Lie algebra $\mathfrak{g}$ we denote by ${}_{A} \Mm$ and ${}_{ \mathfrak{g}}\Ll\Mm$  the categories of
left $A$-modules and left Lie $ \mathfrak{g}$-modules, respectively. Recall that a (left) Lie $\mathfrak{g}$-module is a vector space $V$ equipped with a bilinear map $ \rightharpoonup \colon \mathfrak{g} \times V \to V$ such that for all $x$, $y \in \mathfrak{g}$ and $v \in V$ we have:
\begin{eqnarray*}
[x,\,y]  \rightharpoonup v = x  \rightharpoonup (y  \rightharpoonup v) - y  \rightharpoonup(x  \rightharpoonup v).
\end{eqnarray*}

Throughout the paper, $\mathfrak{g}$ and $\mathfrak{h}$ denote two arbitrary Lie algebras with $ \mathfrak{h}$ finite dimensional, while $\{f_i \, | \, i \in I\}$ and $\{e_1, \cdots, e_n\}$ will be two fixed basis in $\mathfrak{g}$ and $\mathfrak{h}$, respectively. Furthermore, in what follows we consider $\{\tau_{i, j}^s \, | \, i, j, s = 1, \cdots, n \}$ to be the
structure constants of $\mathfrak{h}$, i.e. for any $i$, $j = 1,
\cdots, n$ we have:
\begin{equation}\eqlabel{const1}
\left[e_i, \, e_j \right]_{\mathfrak{h}} = \sum_{s=1}^n \,
\tau_{i, j}^s \, e_s.
\end{equation}
Similarly, for any $i$, $j\in I$, let $B_{i,j} \subseteq I$ be a finite subset of
$I$ such that:
\begin{equation}\eqlabel{const3.4}
\left[f_i, \, f_j \right]_{\mathfrak{g}} = \sum_{u \in B_{i, j}}
\, \beta_{i, j}^u \, f_{u},
\end{equation}
for some scalars $\beta_{i, j}^u \in k$.

\subsection{The universal algebra of $\mathfrak{h}$ and $\mathfrak{g}$}
We recall briefly, for further use, the construction of the universal commutative algebra ${\mathcal A} (\mathfrak{h}, \, \mathfrak{g})$ of two given Lie algebras $\mathfrak{h}$ and $\mathfrak{g}$ (recall that $\mathfrak{h}$ is always assumed to be finite dimensional). 
It was first introduced in \cite{am20} in the more general setting of Leibniz algebras as the counterpart of Tambara's construction (\cite{Tambara}). We restrict here to the Lie algebra version of the construction which can be summarized as follows. We have: 
\begin{equation}\eqlabel{alguniv}
{\mathcal A} (\mathfrak{h}, \, \mathfrak{g}) :=  k [X_{si} \, | s
= 1, \cdots, n, \, i\in I] / \mathcal{J}
\end{equation}
where $ \mathcal{J}$ is the ideal generated by all polynomials of the form
\begin{equation}\eqlabel{poluniv}
P_{(a, i, j)} ^{(\mathfrak{h}, \, \mathfrak{g})} := \sum_{u \in
B_{i, j}} \, \beta_{i, j}^u \, X_{au} - \sum_{s, t = 1}^n \,
\tau_{s, t}^a \, X_{si} X_{tj}, \quad {\rm for}\,\, {\rm all}\,\, a = 1, \cdots, n\,\, {\rm and}\,\, i,\, j\in I.
\end{equation}
When working in the universal algebra ${\mathcal A} (\mathfrak{h}, \,
\mathfrak{g})$, we denote by $x_{si} := \widehat{X_{si}}$ the class of ${X_{si}}$. Consequently, the following relations hold in ${\mathcal A} (\mathfrak{h}, \,
\mathfrak{g})$:
\begin{equation}\eqlabel{relatii}
\sum_{u \in B_{i, j}} \, \beta_{i, j}^u \, x_{au} = \sum_{s, t =
1}^n \, \tau_{s, t}^a \, x_{si} x_{tj}, \quad {\rm for}\,\, {\rm all}\,\, a = 1, \cdots, n,\,\, {\rm and} \,\, i,\, j\in I.
\end{equation}

When the (finite dimensional) Lie algebra $\mathfrak{h}$ is fixed, the universal algebra construction gives rise to a functor ${\mathcal A} (\mathfrak{h},-) \colon {\rm Lie}_k \to {\rm ComAlg}_k$ which turns out to be the left adjoint of the tensor product $\mathfrak{h} \ot - \colon {\rm ComAlg}_k \to {\rm Lie}_k$ (see \cite[Theorem 2.1]{am20}), where for any commutative algebra $X$ the tensor product $\mathfrak{h} \ot X$ is endowed with the current Lie algebra structure. In order to avoid dealing with cumbersome notation, when there is no fear of confusion, we denote $\mathcal{A} = {\mathcal A}(\mathfrak{h},\, \mathfrak{g})$. Furthermore, if $\mathfrak{h} = \mathfrak{g}$, then the corresponding universal algebra ${\mathcal A} (\mathfrak{h}, \, \mathfrak{h})$ will be denoted simply by ${\mathcal B}$.  The notation is meant to highlight the fact that ${\mathcal B}$ is a bialgebra;  in fact, it admits a unique bialgebra structure such that $\mathfrak{h}$ becomes a right ${\mathcal B}$-comodule with respect to $\eta_{\mathfrak{h}} : \mathfrak{h} \to \mathfrak{h} \ot {\mathcal
B}$ where $\eta \colon 1_{{\rm Lie}_{k}} \to \mathfrak{h} \ot {\mathcal A} (\mathfrak{h},-)$ denotes the unit of the adjunction between ${\mathcal A} (\mathfrak{h},-)$ and $\mathfrak{h} \ot -$. More precisely, the comultiplication and the counit on ${\mathcal B}$ are given for any $i$, $j=1, \cdots, n$ by
\begin{equation} \eqlabel{deltaeps}
\Delta (x_{ij}) = \sum_{s=1}^n \, x_{is} \ot x_{sj} \quad {\rm
and} \quad  \varepsilon (x_{ij}) = \delta_{i, j}1_{k}
\end{equation}

For basic categorical concepts we refer the reader to \cite{mlane}
and for unexplained notions pertaining to Lie and Hopf algebras to \cite{jacobson} and \cite{Sw}, respectively.

\section{Universal modules}\selabel{sect2}

Our first important result provides a way of defining a Lie $ \mathfrak{g}$-module structure on the tensor product between a Lie $ \mathfrak{h}$-module and an $\mathcal{A}$-module. 

\begin{theorem}\thlabel{tensor_f}
Let $(U, \curvearrowright) \in {}_{ \mathfrak{h}}\Ll\Mm$ be a Lie $ \mathfrak{h}$-module and $(V, \cdot) \in {}_{\mathcal{A}} \Mm$ an $\mathcal{A}$-module. 
Then $(U \otimes V, \rightharpoonup) \in {}_{ \mathfrak{g}}\Ll\Mm$ is a Lie $\mathfrak{g}$-module where the action of $ \mathfrak{g}$ on $U \otimes V$ is given for all $i \in I$, $l \in U$ and $t \in V$ by:
\begin{equation}\eqlabel{0.0.2}
f_{i} \rightharpoonup (l \ot t) = \sum_{j=1}^{n}\, (e_{j} \curvearrowright  l) \ot (x_{ji} \cdot t)
\end{equation}
\end{theorem}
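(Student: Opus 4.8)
The plan is to verify the defining axiom of a Lie $\mathfrak{g}$-module directly. Since the formula $\equref{0.0.2}$ defines a bilinear map $U \times V \to U \ot V$, it induces a well-defined linear endomorphism of $U \ot V$ for each basis element $f_i$, which I extend linearly to all of $\mathfrak{g}$. Both sides of the module identity
\[
[f_i, f_j]_{\mathfrak{g}} \rightharpoonup (l \ot t) = f_i \rightharpoonup \bigl(f_j \rightharpoonup (l \ot t)\bigr) - f_j \rightharpoonup \bigl(f_i \rightharpoonup (l \ot t)\bigr)
\]
are bilinear in the pair of Lie arguments and linear in the module argument, so it suffices to check it on basis elements $f_i, f_j$ and a simple tensor $l \ot t$; the general case follows by bilinearity of the bracket on $\mathfrak{g}$ and of the action.

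First I would expand the left-hand side. Using $\equref{const3.4}$ for the bracket and then $\equref{0.0.2}$, one gets
\[
[f_i, f_j]_{\mathfrak{g}} \rightharpoonup (l \ot t) = \sum_{c=1}^n (e_c \curvearrowright l) \ot \Bigl( \bigl( \sum_{u \in B_{i,j}} \beta_{i,j}^u \, x_{cu} \bigr) \cdot t \Bigr).
\]
The decisive input now is the defining relation $\equref{relatii}$ of $\mathcal{A}$, applied with $a = c$, which rewrites the inner coefficient as $\sum_{s,p=1}^n \tau_{s,p}^c \, x_{si} x_{pj}$. This turns the left-hand side into $\sum_{c=1}^n \sum_{s,p=1}^n \tau_{s,p}^c \, (e_c \curvearrowright l) \ot (x_{si} x_{pj} \cdot t)$.

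Next I would expand the right-hand side by applying $\equref{0.0.2}$ twice and using that $V$ is an $\mathcal{A}$-module to collapse the nested actions, e.g. $x_{bi} \cdot (x_{aj} \cdot t) = (x_{bi} x_{aj}) \cdot t$. This produces two double sums over $a, b$ whose second tensor legs are $x_{bi} x_{aj} \cdot t$ and $x_{bj} x_{ai} \cdot t$ respectively. Relabelling $a \leftrightarrow b$ in the second sum and invoking the commutativity of $\mathcal{A}$ to identify $x_{aj} x_{bi} = x_{bi} x_{aj}$ makes the two legs coincide, so the right-hand side becomes $\sum_{a,b=1}^n \bigl[ e_b \curvearrowright (e_a \curvearrowright l) - e_a \curvearrowright (e_b \curvearrowright l) \bigr] \ot (x_{bi} x_{aj} \cdot t)$. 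Applying the Lie $\mathfrak{h}$-module axiom to $U$ together with $\equref{const1}$ replaces the bracketed difference by $\sum_{c=1}^n \tau_{b,a}^c \, (e_c \curvearrowright l)$, and the final relabelling $s = b$, $p = a$ identifies the result with the left-hand side.

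The main obstacle — indeed the whole content — is the bookkeeping that makes the $\mathcal{A}$-relation $\equref{relatii}$ dovetail with the Lie $\mathfrak{h}$-module axiom: the relation was engineered precisely so that the structure constants $\tau$ of $\mathfrak{h}$ occur in exactly the combination produced by the commutator of two iterated $\mathfrak{h}$-actions on $U$. The commutativity of $\mathcal{A}$ is what allows the two orderings $x_{bi} x_{aj}$ and $x_{aj} x_{bi}$ to be identified, so that the antisymmetry of $\tau$ built into the bracket of $\mathfrak{h}$ matches the antisymmetry of $[f_i, f_j]_{\mathfrak{g}}$. Once the summation indices are named carefully the verification reduces to a matching of coefficients, and getting the relabellings right is where all the care is required.
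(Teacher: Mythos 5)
Your proposal is correct and follows essentially the same route as the paper's proof: both rest on expanding the bracket via \equref{const3.4}, invoking the defining relations \equref{relatii} of $\mathcal{A}$, and then matching the resulting $\tau$-weighted sum against the commutator of iterated actions using the Lie $\mathfrak{h}$-module axiom on $U$, the $\mathcal{A}$-module axiom on $V$, and the commutativity of $\mathcal{A}$. The only difference is organizational --- you reduce both sides to a common normal form, while the paper rewrites the left-hand side in a single chain until it becomes the right-hand side --- and your explicit remark that bilinearity reduces the check to basis elements is a point the paper leaves implicit.
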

\begin{proof}
Indeed, having in mind that $(U, \curvearrowright)$ is a Lie module and 
${\mathcal A} = {\mathcal A}( \mathfrak{h},\, \mathfrak{g})$ is a commutative algebra, we have:
\begin{eqnarray*}
&& [f_{i},\, f_{j}]\rightharpoonup (l \ot t) \stackrel{\equref{const3.4}} = \sum_{u \in B_{i,j}} \beta^{u}_{i,j}\, f_{u} \rightharpoonup (l \ot t) \stackrel{\equref{0.0.2}} = \sum_{u \in B_{i,j}}\, \sum_{r=1}^{n}  \beta^{u}_{i,j}\, (e_{r} \curvearrowright  l) \ot (x_{ru} \cdot t)\\
&& = \sum_{r=1}^{n}\, (e_{r} \curvearrowright  l) \ot \bigl(\sum_{u \in B_{i,j}}\, \beta^{u}_{i,j} \,x_{ru}\bigl) \cdot t  \stackrel{\equref{relatii}} = \sum_{s,p,r=1}^{n} \, \tau_{s, p}^r \, (e_{r} \curvearrowright  l) \ot (x_{si} x_{pj})\cdot t\\
&& = \sum_{s,p=1}^{n} \, \bigl(\sum_{r=1}^{n}\, \tau_{s, p}^r \, e_{r}\bigl) \curvearrowright  l \ot (x_{si} x_{pj})\cdot t \stackrel{\equref{const1}}= \sum_{s,p=1}^{n} \, [e_{s},\, e_{p}] \curvearrowright  l \ot (x_{si} x_{pj})\cdot t \\
&& = \sum_{s,p=1}^{n} \, e_{s} \curvearrowright (e_{p} \curvearrowright l)\, \ot \, x_{si}\cdot (x_{pj} \cdot t) - \sum_{s,p=1}^{n} \, e_{p} \curvearrowright (e_{s} \curvearrowright l)\, \ot \, x_{pj}\cdot (x_{si} \cdot t)
\end{eqnarray*}
\begin{eqnarray*}
&& \stackrel{\equref{0.0.2}}=  f_{i} \rightharpoonup \sum_{p=1}^{n}\,  (e_{p} \curvearrowright l)\, \ot \, (x_{pj} \cdot t) -  f_{j} \rightharpoonup \sum_{s=1}^{n}\,  (e_{s} \curvearrowright l)\, \ot \, (x_{si} \cdot t) \\
&& \stackrel{\equref{0.0.2}}=  f_{i} \rightharpoonup \bigl(f_{j} \rightharpoonup (l \ot t)\bigl) \, - \, f_{j} \rightharpoonup \bigl(f_{i} \rightharpoonup (l \ot t)\bigl)
\end{eqnarray*}
for all $i$, $j \in I$ and $l \in U$, $t \in V$, i.e. $(U \otimes V,\, \rightharpoonup)$ is a left Lie $\mathfrak{g}$-module. 
\end{proof}

Inspired by \thref{tensor_f} we can consider two types of universal modules. 

\subsection{The universal $\mathcal{A}$-module}
The first such universal module is associated with a Lie $\mathfrak{h}$-module and a Lie $\mathfrak{g}$-module as follows:

\begin{definition}\delabel{u_1}
Given a Lie $\mathfrak{h}$-module $U$ and  a Lie $\mathfrak{g}$-module $Z$, the \emph{universal $\mathcal{A}$-module of $U$ and $Z$} is a pair $\bigl({\mathcal U} (U, \, Z), \rho_{{\mathcal U} (U, \, Z)}\bigl)$ consisting of an $\mathcal{A}$-module ${\mathcal U} (U, \, Z)$ and a morphism of Lie $\mathfrak{g}$-modules $\rho_{{\mathcal U} (U, \, Z)} \colon Z \to U \ot {\mathcal U} (U, \, Z)$ such that for any other pair $(X,f)$ consisting of an $\mathcal{A}$-module $X$ and a morphism of Lie $\mathfrak{g}$-modules $f \colon Z \to U \ot X$, there exists a unique morphism of $\mathcal{A}$-modules $g \colon {\mathcal U} (U, \, Z) \to X$ such that the following diagram is commutative:
\begin{eqnarray}\eqlabel{uni1}
\xymatrix {& Z \ar[rr]^-{\rho_{{\mathcal U} (U, \, Z)}} \ar[drr]_{f}
& {} & {U \ot {\mathcal U} (U, \, Z)} \ar[d]^{{\rm Id}_{U} \ot g}\\
& {} & {} & {U \ot X}}
\end{eqnarray}
\end{definition}

In other words, the above definition is saying that, when it exists, the universal $\mathcal{A}$-module of $U$ and $Z$ is in fact the initial object of the category whose objects are pairs $(X,f)$ consisting of an $\mathcal{A}$-module $X$ and a morphism of Lie $\mathfrak{g}$-modules $f \colon Z \to U \ot X$, while morphisms between two such objects $(X,f)$ and $(X',f')$ are defined to be $\mathcal{A}$-module maps $g \colon X \to X'$ satisfying $({\rm Id}_{U} \ot g) \circ f = f'$.

As direct consequences of the above definition, we obtain the following:

\begin{corollary}\colabel{c_1}
Let $U$ be a Lie $\mathfrak{h}$-module. Then, for all Lie $\mathfrak{g}$-modules $Z$ and all $\mathcal{A}$-modules $X$, we have a bijective correspondence between: 
\begin{enumerate}
\item[(1)] Lie $\mathfrak{g}$-module maps $f \colon Z \to U \ot X$;
\item[(2)] $\mathcal{A}$-module maps $g \colon {\mathcal U}(U,\, Z) \to X$.
\end{enumerate}
\end{corollary}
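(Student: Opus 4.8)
The plan is to read off the claimed bijection directly from the universal property recorded in \deref{u_1}, with the map $\rho_{{\mathcal U}(U,\,Z)}$ serving as a unit. Concretely, I would define a map $\Phi$ sending an $\mathcal{A}$-module map $g \colon {\mathcal U}(U,\,Z) \to X$ to $\Phi(g) := ({\rm Id}_{U} \ot g) \circ \rho_{{\mathcal U}(U,\,Z)} \colon Z \to U \ot X$, and a map $\Psi$ sending a Lie $\mathfrak{g}$-module map $f \colon Z \to U \ot X$ to the unique $\mathcal{A}$-module map $g$ furnished by \deref{u_1}. The goal is then to show that $\Phi$ and $\Psi$ are mutually inverse, which establishes the correspondence between (1) and (2).

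First I would check that $\Phi$ is well defined, i.e. that $({\rm Id}_{U} \ot g) \circ \rho_{{\mathcal U}(U,\,Z)}$ is genuinely a morphism of Lie $\mathfrak{g}$-modules. Since $\rho_{{\mathcal U}(U,\,Z)}$ is such a morphism by definition, it suffices to verify that ${\rm Id}_{U} \ot g \colon U \ot {\mathcal U}(U,\,Z) \to U \ot X$ is Lie $\mathfrak{g}$-linear whenever $g$ is $\mathcal{A}$-linear. This is the only place where the explicit action \equref{0.0.2} of \thref{tensor_f} enters: applying ${\rm Id}_{U} \ot g$ to $f_{i} \rightharpoonup (l \ot t) = \sum_{j=1}^{n} (e_{j} \curvearrowright l) \ot (x_{ji} \cdot t)$ and using $g(x_{ji} \cdot t) = x_{ji} \cdot g(t)$ yields $\sum_{j=1}^{n} (e_{j} \curvearrowright l) \ot (x_{ji} \cdot g(t)) = f_{i} \rightharpoonup (l \ot g(t))$, so that ${\rm Id}_{U} \ot g$ commutes with the $\mathfrak{g}$-action. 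This amounts to the functoriality of the assignment $U \ot -$ in its $\mathcal{A}$-module argument, and I expect it to be the only substantive (though very short) computation in the whole argument; the rest is purely bookkeeping.

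It then remains to show that $\Phi$ and $\Psi$ are inverse to one another, and both directions are formal consequences of \deref{u_1}. For $\Phi \circ \Psi = {\rm id}$, starting from a Lie $\mathfrak{g}$-module map $f$, the map $g = \Psi(f)$ satisfies $({\rm Id}_{U} \ot g) \circ \rho_{{\mathcal U}(U,\,Z)} = f$ by the commutativity of the defining diagram \equref{uni1}, which is exactly the assertion $\Phi(g) = f$. For $\Psi \circ \Phi = {\rm id}$, starting from an $\mathcal{A}$-module map $g$, the morphism $\Phi(g) = ({\rm Id}_{U} \ot g) \circ \rho_{{\mathcal U}(U,\,Z)}$ is a Lie $\mathfrak{g}$-module map factoring through $\rho_{{\mathcal U}(U,\,Z)}$ via the $\mathcal{A}$-module map $g$; since \deref{u_1} guarantees that such a factorization is unique, the unique map associated to $\Phi(g)$ must be $g$ itself, i.e. $\Psi(\Phi(g)) = g$. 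Hence $\Phi$ is a bijection with inverse $\Psi$, which proves the corollary.
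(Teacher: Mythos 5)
Your proposal is correct and is precisely the argument the paper intends: the paper states this corollary without proof as a direct consequence of \deref{u_1}, and your maps $\Phi$ and $\Psi$ are exactly the correspondence the paper later uses (as $\Gamma_{Z,X}$) in the proof of \thref{adjoint_tens}. Your well-definedness check that ${\rm Id}_{U} \ot g$ is Lie $\mathfrak{g}$-linear is the same computation the paper records separately as part 1) of \prref{tens_fun}.
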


Under the appropriate finite-dimensionality assumptions required for all Manin-Tambara type constructions, the universal $\mathcal{A}$-module introduced in \deref{u_1} exists:

\begin{theorem}\thlabel{exist1}
If $U$ is a finite dimensional Lie $\mathfrak{h}$-module then the universal $\mathcal{A}$ -module of $U$ and any other Lie $\mathfrak{g}$-module $Z$ exists. 
\end{theorem}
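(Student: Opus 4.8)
The plan is to construct $\mathcal{U}(U,\,Z)$ explicitly as a quotient of a free $\mathcal{A}$-module, following the usual pattern for Manin--Tambara type constructions. Since $U$ is finite dimensional, I would fix a basis $\{u_1, \cdots, u_m\}$ of $U$ and record the structure constants of the $\mathfrak{h}$-action, say $e_j \curvearrowright u_k = \sum_{p=1}^m c_{jk}^p\, u_p$. The finite dimensionality of $U$ enters precisely here: it supplies the natural isomorphism $\Hom(Z,\, U \ot X) \cong \Hom(U^* \ot Z,\, X)$ valid for every vector space $X$, which lets us replace the datum of a map $Z \to U \ot X$ by a plain linear map out of $U^* \ot Z$ and hence encode it through a free module.

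First I would form the free $\mathcal{A}$-module $F := \mathcal{A} \ot (U^* \ot Z)$ on the vector space $U^* \ot Z$, where $\{u_1^*, \cdots, u_m^*\}$ denotes the dual basis and $\mathcal{A}$ acts on the leftmost tensorand. By the universal property of the free module, $\mathcal{A}$-module maps $F \to X$ correspond bijectively to linear maps $U^* \ot Z \to X$, and these in turn correspond to arbitrary linear maps $f \colon Z \to U \ot X$ via $f(z) = \sum_p u_p \ot f_p(z)$ with $f_p(z) := \tilde{f}(u_p^* \ot z)$. Thus $F$ already solves the universal problem once the Lie $\mathfrak{g}$-module constraint is forgotten; what remains is to cut $F$ down so that the tautological map becomes a morphism of Lie $\mathfrak{g}$-modules.

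Next, writing $\rho(z) = \sum_p u_p \ot \overline{1 \ot u_p^* \ot z}$ for the candidate structure map (bars denoting classes in the quotient), I would compute $\rho(f_i \rightharpoonup z)$ and $f_i \rightharpoonup \rho(z)$ using \equref{0.0.2} and compare the coefficients of each $u_p$. This pinpoints the relations that must be imposed: one takes $R$ to be the $\mathcal{A}$-submodule of $F$ generated by
\[
1 \ot u_p^* \ot (f_i \rightharpoonup z) - \sum_{k=1}^m \sum_{j=1}^n c_{jk}^p\, \bigl(x_{ji} \ot u_k^* \ot z\bigr),
\]
for all $p = 1, \cdots, m$, all $i \in I$ and all $z \in Z$, and sets $\mathcal{U}(U,\,Z) := F/R$ with the induced structure map $\rho_{\mathcal{U}(U,\,Z)}$. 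By the very choice of $R$, this $\rho_{\mathcal{U}(U,\,Z)}$ is a morphism of Lie $\mathfrak{g}$-modules.

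Finally, for the universal property I would start from an arbitrary pair $(X,\,f)$ with $f \colon Z \to U \ot X$ a Lie $\mathfrak{g}$-module map, extend the associated linear map $U^* \ot Z \to X$ to an $\mathcal{A}$-module map $\bar{g} \colon F \to X$, and check that $\bar{g}$ annihilates the generators of $R$. The decisive point---and the only place where the hypothesis on $f$ is used---is that applying $\bar{g}$ to a generator of $R$ yields the equation $f_p(f_i \rightharpoonup z) = \sum_{k,j} c_{jk}^p\, x_{ji} \cdot f_k(z)$, which is exactly the component form of the identity $f(f_i \rightharpoonup z) = f_i \rightharpoonup f(z)$ asserting that $f$ is a Lie $\mathfrak{g}$-module map. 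Hence $\bar{g}$ factors through a unique $\mathcal{A}$-module map $g \colon \mathcal{U}(U,\,Z) \to X$, and both the commutativity $({\rm Id}_{U} \ot g) \circ \rho_{\mathcal{U}(U,\,Z)} = f$ and the uniqueness of $g$ follow at once, since the classes $\overline{1 \ot u_p^* \ot z}$ generate $\mathcal{U}(U,\,Z)$ as an $\mathcal{A}$-module. The main obstacle is purely the bookkeeping needed to verify that the defining relations of $R$ coincide, on the nose, with both the morphism condition for $\rho_{\mathcal{U}(U,\,Z)}$ and the factorization condition for $\bar{g}$; once this coincidence is isolated, the remaining verifications are routine.
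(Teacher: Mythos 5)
Your construction is correct and essentially the same as the paper's: your free module $F=\mathcal{A}\ot(U^*\ot Z)$ is precisely the paper's free $\mathcal{A}$-module ${\mathcal T}(U,Z)$ on the symbols $Y_{ij}$ (via $Y_{ij}\leftrightarrow 1\ot u_i^*\ot z_j$), your generating relations for $R$ are exactly the paper's relations \equref{0.4}, and your verification of the universal property (extend to $\bar{g}$ on the free module, show the Lie $\mathfrak{g}$-morphism condition on $f$ kills the generators, then factor) is the same argument. The only difference---packaging the generators basis-freely in $Z$ through the isomorphism $\Hom(Z,\,U\ot X)\cong\Hom(U^*\ot Z,\,X)$ rather than fixing a basis $\{z_r\}$ of $Z$---is cosmetic.
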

\begin{proof}
Let $\{u_1, \cdots, u_m\}$, $m \in \NN^{*}$, be a $k$-basis of the Lie module $U$ and denote by $\omega^{t}_{ij} \in k$ the structure constants of $U$ with respect to its Lie $\mathfrak{h}$-module structure $ \curvearrowright$, i.e.  for all $i = 1,
\cdots, n$, $j = 1, \cdots, m$ we have:
\begin{equation}\eqlabel{const0.1}
e_i\, \curvearrowright\, u_{j} = \sum_{s=1}^m \,
\omega_{i, j}^s \, u_s
\end{equation}
Furthermore, consider $\{z_{r} ~|~ r \in J\}$ to be a $k$-basis for the arbitrary Lie ${\mathfrak{g}}$-module $Z$ and if $\looparrowright$ denotes its Lie module structure, then for all $j \in I$ and $r \in J$ we can find a finite subset $T_{j,r}$ of $J$ such that:
\begin{equation}\eqlabel{const0.2}
f_j\, \looparrowright\, z_{r} = \sum_{l \in T_{j,r}} \,
\eta_{j, r}^l \, z_l
\end{equation}
where $\eta_{j, r}^l \in k$ for all $j \in I$, $r \in J$, and $l \in T_{j,r}$.

Consider now ${\mathcal T}(U, Z)$ to be the free $\mathcal{A}$-module on the set $\{Y_{i j} ~|~ i = 1, \cdots, m,\, j \in J\}$ and denote by ${\mathcal U}(U, Z)$ the quotient of ${\mathcal T}(U, Z)$ by its $\mathcal{A}$-submodule generated by the following elements:
\begin{eqnarray}
\sum_{p \in
T_{j, i}} \, \eta_{j, i}^p \, Y_{sp} - \sum_{t = 1}^{m}\sum_{r= 1}^{n}\,
\omega_{r, t}^s \, x_{r j} \bullet Y_{t i}\eqlabel{0.4}
\end{eqnarray}
for all $s = 1, \cdots, m$, $i \in J$ and $j \in I$, where $\bullet$ denotes the $\mathcal{A}$-module action on ${\mathcal T}(U, Z)$.

Denoting $y_{tj} := \widehat{Y_{tj}}$, where $\widehat{Y_{tj}}$ stands for the equivalence class of ${Y_{tj}}$ in the quotient
module ${\mathcal U} (U, \, Z)$, it follows that the relations below hold in ${\mathcal U} (U, \, Z)$:
\begin{eqnarray}
\sum_{p \in
T_{j, i}} \, \eta_{j, i}^p \, y_{sp} = \sum_{t = 1}^{m}\sum_{r= 1}^{n}\,
\omega_{r, t}^s \, x_{r j} \bullet y_{t i} \eqlabel{0.6}
\end{eqnarray}
for all $s = 1, \cdots, m$, $i \in J$ and $j \in I$.

Furthermore, we can define a morphism of Lie $\mathfrak{g}$-modules $\rho_{{\mathcal U} (U, \, Z)} \colon Z \to U \ot {\mathcal U} (U, \, Z)$ as follows:
\begin{equation}\eqlabel{0.7}
\rho_{{\mathcal U} (U, \, Z)}(z_r) := \sum_{s=1}^m \, u_s \ot y_{sr},\quad {\rm for\,\, all}\,\, r\in J.
\end{equation}
It follows now that for all $j \in I$ and $i \in J$ we have:
\begin{eqnarray*}
&&\hspace*{-7mm} \rho_{{\mathcal U} (U, \, Z)}(f_{j} \looparrowright \, z_{i}) \stackrel{\equref{const0.2}}= \rho_{{\mathcal U}(U, Z)}\Bigl(\sum_{p \in T_{j,i}} \eta_{ji}^{p}\, z_{p} \Bigl) = \sum_{p \in T_{j,i}}\sum_{s = 1}^{m}\, \eta_{ji}^{p}\,u_{s} \ot y_{sp} = \sum_{s=1}^{m}\Bigl(u_{s} \ot \sum_{p \in T_{j,i}} \eta_{ji}^{p}\, y_{sp}\Bigl)\\
&&\hspace*{-7mm}\stackrel{\equref{0.6}} =  \sum_{s, t = 1}^{m}\sum_{r= 1}^{n}\, \omega_{r, t}^s\, u_{s} \ot
  x_{r j} \bullet y_{t i} =  \sum_{t = 1}^{m}\sum_{r= 1}^{n}\, \Bigl(\sum_{s=1}^{m}\omega_{r, t}^s\, u_{s}\Bigl) \ot
  x_{r j} \bullet y_{t i}\\
&&\hspace*{-7mm}\stackrel{\equref{const0.1}} =  \sum_{t=1}^{m} \sum_{r=1}^{n}  e_r \, \curvearrowright \, u_{t} \ot
  x_{r j} \bullet y_{t i} \stackrel{\equref{0.0.2}}  = \sum_{t=1}^{m} f_{j} \rightharpoonup (u_{t} \ot y_{ti}) =  f_{j} \rightharpoonup \sum_{t=1}^{m} u_{t} \ot y_{ti}\\
&&\hspace*{-7mm} \stackrel{\equref{0.7}}= f_{j} \rightharpoonup \rho_{{\mathcal U} (U, \, Z)}(z_{i})
\end{eqnarray*}
which shows that $\rho_{{\mathcal U} (U, \, Z)}$ is indeed a Lie $\mathfrak{g}$-modules map.

We will show that the pair $\bigl({\mathcal U} (U, \, Z), \rho_{{\mathcal U} (U, \, Z)}\bigl)$ constructed above is in fact the universal $\mathcal{A}$-module of $U$ and $Z$. To this end, consider a pair $(X,f)$ consisting of an $\mathcal{A}$-module $X$ and a morphism of Lie $\mathfrak{g}$-modules $f \colon Z \to U \ot X$. Let $\{w_{sr} \, | \, s = 1, \cdots, m, r\in J \}$ be a family of elements of $X$ such that for
all $r\in J$ we have:
\begin{equation}\eqlabel{constfmor}
f(z_r) = \sum_{s=1}^m \, u_s \ot w_{sr}
\end{equation}
Furthermore, as $f \colon Z \to U\ot X$ is a Lie $\mathfrak{g}$-modules
map, a straightforward computation shows that the following compatibilities hold for all $s = 1, \cdots, m$, $i \in J$ and $j \in I$:
\begin{eqnarray}
\sum_{p \in
T_{j, i}} \, \eta_{j, i}^p \, w_{sp} = \sum_{t = 1}^{m}\sum_{r= 1}^{n}\,
\omega_{r, t}^s \, x_{r j} \cdot w_{t i} \eqlabel{rel0.0.4}
\end{eqnarray}
where $\cdot$ denotes the $\mathcal{A}$-module action on $X$. 

The universal property of the free module yields a unique $\mathcal{A}$-module map $\overline{g} \colon {\mathcal T}(U, Z) \to X$ such that $\overline{g}(Y_{sr}) =
w_{sr}$, for all $s = 1, \cdots, m$ and $r\in J$. Moreover, ${\rm Ker} (\overline{g})$ contains the $\mathcal{A}$-submodule of ${\mathcal T}(U, Z)$ generated by the elements listed in \equref{0.4}. Indeed, as $\overline{g} \colon {\mathcal U}(U, Z) \to X$ is a morphism of $\mathcal{A}$-modules we have: 
\begin{eqnarray*}
\overline{g}\Bigl(\sum_{p \in
T_{j, i}} \, \eta_{j, i}^p \, Y_{sp} - \sum_{t = 1}^{m}\sum_{r= 1}^{n}\,
\omega_{r, t}^s \, x_{r j} \bullet Y_{t i}\Bigl) = \sum_{p \in
T_{j, i}} \, \eta_{j, i}^p \, w_{sp} - \sum_{t = 1}^{m}\sum_{r= 1}^{n}\,
\omega_{r, t}^s \, x_{r j} \cdot w_{t i} \stackrel{\equref{rel0.0.4}} = 0
\end{eqnarray*}
for all $s = 1, \cdots, m$, $i \in J$ and $j \in I$. This shows that there exists a unique $\mathcal{A}$-modules map $g \colon {\mathcal U}(U, Z) \to X$ such that $g(y_{sr}) = z_{sr}$, for all $s = 1, \cdots, m$ and $r\in J$. This implies that for all $r \in J$ we have:
\begin{eqnarray*}
\bigl({\rm Id}_{U} \ot g \bigl) \circ \,
\rho_{{\mathcal U} (U, \, Z)} (z_r) = \bigl({\rm Id}_{U} \ot
g \bigl) \bigl( \sum_{s=1}^{m} \, u_s \ot y_{sr} \bigl) = \sum_{s=1}^{m} \, u_s \ot w_{sr} \stackrel{\equref{constfmor}}
= f (z_{r})
\end{eqnarray*}
which means precisely that diagram \equref{uni1} is commutative. Moreover, $g$ is obviously the unique $\mathcal{A}$-modules map with this property and the proof is now finished. 
\end{proof}

\subsection*{The case $\mathfrak{g} = \mathfrak{h}$}

Particularizing the results of \seref{sect2} for $\mathfrak{g} = \mathfrak{h}$, where $\mathfrak{h}$ is the finite dimensional Lie algebra defined in \equref{const1}, leads to the following interesting consequences. According to the discussion in Preliminaries, the universal algebra $\mathcal{A}(\mathfrak{h}, \mathfrak{h})$ denoted by ${\mathcal B}$ is in this case a bialgebra with coalgebra structure depicted in \equref{deltaeps}. This allows us to see the tensor product ${\mathcal U}(U,Z) \ot {\mathcal U}(U,Z)$ as well as the base field $k$ as ${\mathcal B}$-modules via the comultiplication and the counit of ${\mathcal B}$ as follows:
\begin{eqnarray}
&& x_{ij}* (y \ot t) = \sum_{t=1}^{n} x_{it} \bullet y \ot x_{tj} \bullet t\eqlabel{mod011}\\
&& x_{ij} \cdot \alpha = \delta_{ij}\alpha \eqlabel{mod022}
\end{eqnarray}
for all $x_{ij} \in {\mathcal B}$, $y$, $t \in {\mathcal U}(U,Z)$ and $\alpha \in k$, where $\bullet$ denotes the ${\mathcal B}$-module structure on ${\mathcal U}(U,Z)$ as in the proof of \thref{exist1}.  

First we show that if $U$ is a finite dimensional Lie $\mathfrak{h}$-module as considered in \equref{const0.1}, then the ${\mathcal B}$-module ${\mathcal U}(U,\,U)$ denoted by ${\mathcal U}(U)$ admits a coalgebra structure with respect to which $\bigl(U, \rho_{{\mathcal U}(U)}\bigl)$ becomes a right ${\mathcal U}(U)$-comodule.

\begin{proposition}\prlabel{p2}
Let $U$ be a finite dimensional Lie $\mathfrak{h}$-module. There exists a unique coalgebra structure on ${\mathcal U}(U)$ such that $\bigl(U, \rho_{{\mathcal U}(U)}\bigl)$ becomes a right ${\mathcal U}(U)$-comodule.
\end{proposition}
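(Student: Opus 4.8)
The plan is to read off both structure maps from the universal property in \coref{c_1}, which in the present situation ($Z = U$ and ${\mathcal A} = {\mathcal B}$) furnishes a bijection between ${\mathcal A}$-module maps $C \to X$ and Lie $\mathfrak{g}$-module maps $U \to U \ot X$, where I abbreviate $C := {\mathcal U}(U)$ and $\rho := \rho_{{\mathcal U}(U)}$; recall $\rho(u_r) = \sum_s u_s \ot y_{sr}$ from \equref{0.7}. Applying this bijection with $X = C \ot C$ (with the ${\mathcal B}$-module structure \equref{mod011}) and with $X = k$ (with the structure \equref{mod022}), I will let $\Delta \colon C \to C \ot C$ and $\varepsilon \colon C \to k$ be the unique ${\mathcal A}$-module maps corresponding respectively to $(\rho \ot {\rm Id}_C) \circ \rho \colon U \to U \ot (C \ot C)$ and to ${\rm Id}_U \colon U \to U \ot k \cong U$. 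By construction these satisfy $({\rm Id}_U \ot \Delta) \circ \rho = (\rho \ot {\rm Id}_C) \circ \rho$ and $({\rm Id}_U \ot \varepsilon) \circ \rho = {\rm Id}_U$, which are exactly the coassociativity and counit axioms for $(U, \rho)$ to be a right $C$-comodule; testing them on the basis $\{u_r\}$ forces $\Delta(y_{sr}) = \sum_p y_{sp} \ot y_{pr}$ and $\varepsilon(y_{sr}) = \delta_{sr}$, matching the pattern of \equref{deltaeps}.

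For this to make sense I must first check that the two maps fed into \coref{c_1} really are Lie $\mathfrak{g}$-module maps. For ${\rm Id}_U \colon U \to U \ot k$ this is the quick observation that, by \equref{mod022}, the action of \thref{tensor_f} on $U \ot k$ is $f_i \rightharpoonup (l \ot \alpha) = (e_i \curvearrowright l) \ot \alpha$, so that $U \ot k \cong U$ as Lie $\mathfrak{g}$-modules. The substantive point is that $\rho \ot {\rm Id}_C \colon U \ot C \to U \ot (C \ot C)$ is a Lie $\mathfrak{g}$-module map: writing $\rho(l) = \sum_a l_a \ot c_a$ and expanding the action of \thref{tensor_f} through \equref{0.0.2} on either side, while using that $\rho$ is itself a $\mathfrak{g}$-module map and that the ${\mathcal B}$-action on $C \ot C$ is the one in \equref{mod011}, both $(\rho \ot {\rm Id}_C)\bigl(f_i \rightharpoonup (l \ot t)\bigr)$ and $f_i \rightharpoonup (\rho \ot {\rm Id}_C)(l \ot t)$ collapse to $\sum (e_b \curvearrowright l_a) \ot (x_{bk} \bullet c_a) \ot (x_{ki} \bullet t)$. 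Consequently $(\rho \ot {\rm Id}_C) \circ \rho$ is a composite of $\mathfrak{g}$-module maps. I expect this compatibility -- between the functorial action of \thref{tensor_f} and the coproduct-induced ${\mathcal B}$-action \equref{mod011}, i.e. the interaction of the tensor functor with the bialgebra comultiplication \equref{deltaeps} of ${\mathcal B}$ -- to be the main obstacle of the proof; the rest is formal manipulation.

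To upgrade the two comodule identities to the coalgebra axioms I will use the uniqueness half of \coref{c_1}: two ${\mathcal A}$-module maps $g, g' \colon C \to Y$ with $({\rm Id}_U \ot g) \circ \rho = ({\rm Id}_U \ot g') \circ \rho$ necessarily coincide. Since $\Delta$ is ${\mathcal B}$-linear and $C^{\ot 3}$ carries a well-defined tensor ${\mathcal B}$-module structure (coassociativity of \equref{deltaeps}), both $(\Delta \ot {\rm Id}_C) \circ \Delta$ and $({\rm Id}_C \ot \Delta) \circ \Delta$ are ${\mathcal A}$-module maps $C \to C^{\ot 3}$; composing each with $({\rm Id}_U \ot -) \circ \rho$ and repeatedly invoking $({\rm Id}_U \ot \Delta) \circ \rho = (\rho \ot {\rm Id}_C) \circ \rho$, I find both reduce to $(\rho \ot {\rm Id}_C \ot {\rm Id}_C) \circ (\rho \ot {\rm Id}_C) \circ \rho$, so $\Delta$ is coassociative. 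The same argument applied to $(\varepsilon \ot {\rm Id}_C) \circ \Delta$ and $({\rm Id}_C \ot \varepsilon) \circ \Delta$, now using the comodule counit axiom, shows both equal ${\rm Id}_C$, establishing the counit axioms. Hence $(C, \Delta, \varepsilon)$ is a coalgebra for which $\rho$ is a comodule structure.

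Uniqueness then comes for free: for any coalgebra structure on $C$ making $(U, \rho)$ a comodule, the defining identities force $\Delta$ and $\varepsilon$ to take the above values on the generators $y_{sr}$, and since these generate $C$ as a ${\mathcal B}$-module the ${\mathcal B}$-linear structure maps are thereby determined on all of $C$; equivalently, $\Delta$ and $\varepsilon$ are singled out as the unique ${\mathcal A}$-module maps provided by \coref{c_1}, so they agree with the ones constructed above.
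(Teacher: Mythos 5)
Your proposal is correct and takes essentially the same route as the paper: both obtain $\Delta$ and $\varepsilon$ as the unique ${\mathcal B}$-module maps given by the universal property of ${\mathcal U}(U)$ applied to $\bigl(\rho_{{\mathcal U}(U)} \ot {\rm Id}_{{\mathcal U}(U)}\bigl) \circ \rho_{{\mathcal U}(U)}$ (with target ${\mathcal U}(U) \ot {\mathcal U}(U)$ carrying the ${\mathcal B}$-action \equref{mod011}) and to ${\rm can}_{U}$ (with target $k$ via \equref{mod022}), and both read off the comatrix formulas on the generators $y_{sr}$. The only minor divergences are that the paper deduces coassociativity and counitality directly from those explicit formulas while you derive them by the formal uniqueness argument, and that you spell out the verification that $\rho_{{\mathcal U}(U)} \ot {\rm Id}_{{\mathcal U}(U)}$ is a Lie module map, which the paper dismisses as easily checked.
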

\begin{proof}
In particular both ${\mathcal U}(U) \ot {\mathcal U}(U)$ and $k$ are ${\mathcal B}$-modules via the formulas \equref{mod011} and \equref{mod022} respectively. Therefore, $U \ot {\mathcal U}(U) \ot {\mathcal U}(U)$ and $U \ot k$ are Lie $\mathfrak{h}$-modules via \equref{0.0.2}. Furthermore, it can be easily checked that the maps $\bigl(\rho_{{\mathcal U}(U)} \ot {\rm Id}_{{\mathcal U}(U)}\bigl) \circ \rho_{{\mathcal U}(U)} \colon U \to U \ot {\mathcal U}(U) \ot {\mathcal U}(U)$ and ${\rm can}_{U} \colon U \to U \ot k$ are morphisms of Lie $\mathfrak{h}$-modules, where ${\rm can}_{U} \colon U \to U \ot k$ is the canonical isomorphism. Now \deref{u_1} yields a unique ${\mathcal B}$-modules map $\Delta \colon {\mathcal U}(U) \to {\mathcal U}(U) \ot {\mathcal U}(U)$ such that the following diagram is commutative:
\begin{eqnarray*}
\xymatrix {& U \ar[r]^-{\rho_{{\mathcal U}(U)}} \ar[ddr]_-{\bigl(\rho_{{\mathcal U}(U)} \ot {\rm Id}_{{\mathcal U}(U)}\bigl) \circ \rho_{{\mathcal U}(U)}}
& {U \ot {\mathcal U}(U)} \ar[dd]^{{\rm Id}_{U} \ot \Delta}\\
& {} & {} \\
& {} & {U \ot {\mathcal U}(U) \ot {\mathcal U}(U)}}
\end{eqnarray*}
Similarly, we obtain a unique ${\mathcal B}$-modules map $\varepsilon \colon  {\mathcal U}(U) \to k$ such that the following diagram is commutative:
\begin{eqnarray*}
\xymatrix {& U \ar[r]^-{\rho_{{\mathcal U}(U)}} \ar[dr]_{{\rm can}_{U}}
& {U \ot {\mathcal U}(U)} \ar[d]^{{\rm Id}_{U} \ot \varepsilon}\\
& {} & {U \ot k}}
\end{eqnarray*}
A straightforward computation shows that the commutativity of the two diagrams above implies that $\Delta$ and $\varepsilon$ take the following form for all $l$, $t =1,\cdots, m$:
\begin{eqnarray*}
\Delta(y_{lt}) = \sum_{s=1}^{m}\, y_{ls} \ot y_{st},\quad \varepsilon(y_{lt}) = \delta_{lt}1_{k}.
\end{eqnarray*}
It is now obvious that $\bigl({\mathcal U}(U), \Delta, \varepsilon\bigl)$ form a coalgebra. Finally, by the commutativity of the two diagrams above we obtain that $\bigl(U, \rho_{{\mathcal U}(U)}\bigl)$ is a right ${\mathcal U}(U)$-comodule.  
\end{proof}

\begin{remark}
It is worth pointing out that with the coalgebra structure introduced above, ${\mathcal U}(U)$ becomes a ${\mathcal B}$-module coalgebra. 
Indeed, having in mind that both $\Delta$ and $\varepsilon$ are ${\mathcal B}$-module maps, we have:
\begin{eqnarray*}
&&\Delta(x_{ab} \bullet y_{lt}) = x_{ab}* \Delta(y_{lt}) = x_{ab} * \bigl(\sum_{s=1}^{m}\, y_{ls} \ot y_{st}\bigl) \stackrel{\equref{mod011}} = \sum_{c=1}^{n}\sum_{s=1}^{m}\, x_{ac} \bullet y_{ls} \ot x_{cb} \bullet y_{st}\\
&& = (x_{ab})_{(1)} \bullet (y_{lt})_{(1)} \ot (x_{ab})_{(2)} \bullet (y_{lt})_{(2)}
\end{eqnarray*}
and
\begin{eqnarray*}
&&\varepsilon(x_{ab} \bullet y_{lt}) = x_{ab}\cdot \varepsilon(y_{lt})  \stackrel{\equref{mod022}} = \delta_{ab}\, \varepsilon(y_{lt}) =\varepsilon(x_{ab}) \, \varepsilon(y_{lt}). 
\end{eqnarray*}

This shows that $\bullet$ is a coalgebra map, as desired.
\end{remark}

It turns out that the pair $\bigl({\mathcal U}(U),\, \rho_{{\mathcal U}(U)}\bigl)$ is universal in the following way:

\begin{proposition}
For any coalgebra $X$ with a ${\mathcal B}$-module structure and any Lie $\mathfrak{h}$-module morphism $\psi \colon U \to U\ot X$ which makes $U$ into a right $X$-comodule, there exists a unique ${\mathcal B}$-modules and coalgebra morphism $\theta \colon {\mathcal U}(U) \to X$ such that the following diagram is commutative:
\begin{eqnarray*}
\xymatrix {& U \ar[r]^-{ \rho_{{\mathcal U}(U)}} \ar[dr]_{\psi}
& {U \ot {\mathcal U}(U)} \ar[d]^{{\rm Id}_{U} \ot \theta}\\
& {} & {U \ot X}}
\end{eqnarray*}
\end{proposition}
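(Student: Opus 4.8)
The plan is to read off $\theta$ from the universal property of ${\mathcal U}(U)$ recorded in \deref{u_1} and \coref{c_1}, and then to promote it to a coalgebra morphism by two further applications of the \emph{uniqueness} half of that property. Since $X$ is in particular a ${\mathcal B}$-module and $\psi \colon U \to U \ot X$ is a morphism of Lie $\mathfrak{h}$-modules for the structure of \thref{tensor_f}, \coref{c_1} (applied with $\mathfrak{g}=\mathfrak{h}$ and $Z=U$) yields a unique ${\mathcal B}$-module map $\theta \colon {\mathcal U}(U) \to X$ satisfying $({\rm Id}_U \ot \theta)\circ \rho_{{\mathcal U}(U)} = \psi$; this is precisely the commutativity of the displayed triangle, and it already gives uniqueness of $\theta$ among all ${\mathcal B}$-module maps, hence a fortiori among ${\mathcal B}$-module and coalgebra maps. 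Thus everything reduces to checking that this $\theta$ is compatible with the comultiplications and the counits.

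For the comultiplication I would compare the two maps $\Delta_X \circ \theta$ and $(\theta \ot \theta)\circ \Delta$ from ${\mathcal U}(U)$ to $X \ot X$. Both are ${\mathcal B}$-linear: $\Delta$ is a ${\mathcal B}$-module map by \prref{p2}, $\theta \ot \theta$ is ${\mathcal B}$-linear for the tensor-product module structure defined as in \equref{mod011}, and $\Delta_X$ is ${\mathcal B}$-linear — this is exactly where one uses that the coalgebra and module structures on $X$ are compatible, i.e. that $X$ is a ${\mathcal B}$-module coalgebra. By the uniqueness in \coref{c_1} with target $X\ot X$ it is then enough to see that precomposing each of them with $\rho_{{\mathcal U}(U)}$ gives the same Lie $\mathfrak{h}$-module map $U \to U \ot X \ot X$. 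Using $({\rm Id}_U\ot\theta)\circ\rho_{{\mathcal U}(U)}=\psi$, the first composite is $({\rm Id}_U \ot \Delta_X)\circ \psi$. For the second I would insert the coassociativity $({\rm Id}_U \ot \Delta)\circ \rho_{{\mathcal U}(U)} = (\rho_{{\mathcal U}(U)} \ot {\rm Id})\circ \rho_{{\mathcal U}(U)}$ of the right ${\mathcal U}(U)$-comodule $U$ from \prref{p2}, and then push the two copies of $\theta$ through — invoking only the defining identity $\psi=({\rm Id}_U\ot\theta)\circ\rho_{{\mathcal U}(U)}$ — to rewrite it as $(\psi \ot {\rm Id}_X)\circ \psi$. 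These two maps agree by the coassociativity axiom of the right $X$-comodule $(U,\psi)$, so $\Delta_X\circ\theta=(\theta\ot\theta)\circ\Delta$.

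The counit is entirely analogous but shorter: $\varepsilon_X\circ\theta$ and $\varepsilon$ are ${\mathcal B}$-module maps ${\mathcal U}(U)\to k$ (with $k$ the ${\mathcal B}$-module of \equref{mod022}), and precomposing with $\rho_{{\mathcal U}(U)}$ returns $({\rm Id}_U\ot\varepsilon_X)\circ\psi$ and $({\rm Id}_U\ot\varepsilon)\circ\rho_{{\mathcal U}(U)}$, both equal to ${\rm can}_U$ by the counit axioms of the two comodule structures; uniqueness then forces $\varepsilon_X\circ\theta=\varepsilon$.

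The only genuinely delicate point is the comultiplication step: one must make sure both competitors are honestly ${\mathcal B}$-linear, so that \coref{c_1} is applicable, and that the Sweedler-type bookkeeping converting $(\rho_{{\mathcal U}(U)}\ot{\rm Id})\circ\rho_{{\mathcal U}(U)}$ into $(\psi\ot{\rm Id}_X)\circ\psi$ after applying $\theta\ot\theta$ uses nothing beyond that defining identity for $\theta$. As a sanity check one can instead run the verification on the generators $y_{lt}$, where $\theta(y_{lt})$ are the structure coefficients $w_{lt}$ of $\psi$ and the comodule axioms for $(U,\psi)$ read $\Delta_X(w_{lt})=\sum_{s}w_{ls}\ot w_{st}$ and $\varepsilon_X(w_{lt})=\delta_{lt}$, matching the formulas for $\Delta(y_{lt})$ and $\varepsilon(y_{lt})$ obtained in \prref{p2}.
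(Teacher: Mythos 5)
Your construction of $\theta$ coincides with the paper's: both invoke \deref{u_1} (equivalently \coref{c_1}) to produce the unique ${\mathcal B}$-module map satisfying $({\rm Id}_U \ot \theta)\circ\rho_{{\mathcal U}(U)} = \psi$, and both note that uniqueness is then automatic. Where you genuinely diverge is in verifying that $\theta$ respects the coalgebra structures. The paper argues in coordinates: from the proof of \thref{exist1} one has $\theta(y_{lt}) = z_{lt}$, where $\psi(u_r) = \sum_s u_s \ot z_{sr}$; the comodule axioms for $(U,\psi)$ give $\Delta_X(z_{lt}) = \sum_s z_{ls}\ot z_{st}$ and $\varepsilon_X(z_{lt}) = \delta_{lt}1_k$, which matches the formulas for $\Delta(y_{lt})$ and $\varepsilon(y_{lt})$ from \prref{p2} --- this is exactly your closing ``sanity check'' paragraph. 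Your primary argument is instead coordinate-free: you compare $\Delta_X\circ\theta$ with $(\theta\ot\theta)\circ\Delta$ (and $\varepsilon_X\circ\theta$ with $\varepsilon$) as ${\mathcal B}$-linear maps, precompose with $\rho_{{\mathcal U}(U)}$, and conclude by the injectivity of the correspondence in \coref{c_1}; the identity $\bigl({\rm Id}_U\ot((\theta\ot\theta)\circ\Delta)\bigl)\circ\rho_{{\mathcal U}(U)} = (\psi\ot{\rm Id}_X)\circ\psi$ indeed uses nothing beyond the defining property of $\theta$ and the coassociativity from \prref{p2}, so the mechanics of your argument are sound. Its advantage is that it isolates exactly which structural compatibilities are consumed; its cost is that it must certify ${\mathcal B}$-linearity of every map fed into the uniqueness statement.

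That cost is the one point that deserves attention, and you flag it yourself: your route needs $\Delta_X$ and $\varepsilon_X$ to be ${\mathcal B}$-linear, i.e. $X$ to be a ${\mathcal B}$-module coalgebra, and this is \emph{not} among the stated hypotheses --- the proposition asks only for a coalgebra that happens to carry a ${\mathcal B}$-module structure. So, as written, your proof establishes the statement under an extra assumption. You should be aware, however, that the paper's own proof tacitly requires the same compatibility: it checks the coalgebra-map identities only on the generators $y_{lt}$, but these generate ${\mathcal U}(U)$ only as a ${\mathcal B}$-module, not as a vector space, so to propagate the identity to a general element $a\bullet y_{lt}$ one needs $\Delta_X(a\cdot z_{lt})$ to equal the action of $a$ on $\Delta_X(z_{lt})$ in the tensor-square module structure of \equref{mod011} --- precisely the module-coalgebra condition that the Remark following \prref{p2} verifies for ${\mathcal U}(U)$ itself. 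In short, both arguments prove the proposition for ${\mathcal B}$-module coalgebras $X$; the discrepancy lies in the phrasing of the statement rather than in your proof, and your version has the merit of making the hidden hypothesis explicit.
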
  
\begin{proof}
In light of \deref{u_1}, such a unique ${\mathcal A}$-modules map $\theta$ exists. We are left to show that $\theta$ is also a coalgebra map. From the proof of \thref{exist1} we know that $\theta$ is defined for all $l$, $t = 1, \cdots, m$ by $\theta(y_{lt}) = z_{lt}$ where $z_{lt}$ are elements of $X$ such that for all $r = 1, \cdots, m$ we have $\psi(u_{r}) = \sum_{s=1}^{m}\, u_{s} \ot z_{sr}$. As $(U,\, \psi)$ is a right comodule, we obtain:
\begin{eqnarray*}
\Delta(z_{lt}) = \sum_{s=1}^{m}\, z_{ls} \ot z_{st},\quad \varepsilon(z_{lt}) = \delta_{lt}1_{k}.
\end{eqnarray*}
To this end, we have:
\begin{eqnarray*}
\Delta\bigl(\theta(y_{lt})\bigl) = \Delta(z_{lt}) = \sum_{s=1}^{m} \, z_{ls} \ot z_{st} = \sum_{s=1}^{m} \,\theta(y_{ls}) \ot \theta(y_{st}) = (\theta \ot \theta)\circ \Delta(y_{lt})
\end{eqnarray*}
Similarly one can check that $\varepsilon \circ \theta = \varepsilon$ which shows that $\theta$ is indeed a coalgebra map.
\end{proof}

\subsection{The universal $\mathfrak{h}$-module}

The second type of universal module one can consider is the following:

\begin{definition}\delabel{u_2}
Given an $\mathcal{A}$-module $V$ and a Lie $\mathfrak{g}$-module $W$, the \emph{universal Lie $\mathfrak{h}$-module of $V$ and $W$} is a pair $\bigl({\mathcal V} (V, \, W), \tau_{{\mathcal V} (V, \, W)}\bigl)$ consisting of a Lie $\mathfrak{h}$-module ${\mathcal V} (V, \, W)$ and a morphism of Lie $\mathfrak{g}$-modules $\tau_{{\mathcal V} (V, \, W)} \colon W \to {\mathcal V} (V, \, W) \ot V$ such that for any other pair $(Y,f)$ consisting of a Lie $\mathfrak{h}$-module $Y$ and a morphism of Lie $\mathfrak{g}$-modules $f \colon W \to Y \ot V$, there exists a unique morphism of Lie $\mathfrak{h}$-modules $g \colon {\mathcal V} (V, \, W) \to Y$ such that the following diagram is commutative:
\begin{eqnarray}\eqlabel{uni2}
\xymatrix {& W \ar[rr]^-{\tau_{{\mathcal V} (V, \, W)}} \ar[drr]_{f}
& {} & {{\mathcal V} (V, \, W) \ot V} \ar[d]^{g \ot {\rm Id}_{V}}\\
& {} &  {} &{Y \ot V}}
\end{eqnarray}
\end{definition}
The universal Lie $\mathfrak{h}$-module of $V$ and $W$, when it exists, can again be seen as the initial object of the category whose objects are pairs $(Y,f)$ consisting of a Lie $\mathfrak{h}$-module $Y$ and a morphism of Lie $\mathfrak{g}$-modules $f \colon W \to Y \ot V$, while morphisms between two such objects $(Y,f)$ and $(Y',f')$ are defined to be Lie $\mathfrak{h}$-module maps $g \colon Y \to Y'$ satisfying $(g \ot {\rm Id}_{V}) \circ f = f'$.

\begin{corollary}\colabel{c_2}
Let $V$ be an $\mathcal{A}$-module. Then, for all  Lie $\mathfrak{g}$-modules $W$ and all Lie $\mathfrak{h}$-modules $Y$, we have a bijective correspondence between:
\begin{enumerate}
\item[(1)] Lie $\mathfrak{g}$-module maps $f \colon W \to Y \ot V$;
\item[(2)] Lie $\mathfrak{h}$-module maps $g \colon {\mathcal V}(V,\, W) \to Y$.
\end{enumerate}
\end{corollary}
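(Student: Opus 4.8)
The plan is to read the claimed bijection directly off the universal property recorded in \deref{u_2}, in complete analogy with the way \coref{c_1} follows from \deref{u_1}. First I would define the assignment from (1) to (2): given a Lie $\mathfrak{g}$-module map $f \colon W \to Y \ot V$, \deref{u_2} furnishes a unique Lie $\mathfrak{h}$-module map $g \colon {\mathcal V}(V,\,W) \to Y$ satisfying $(g \ot {\rm Id}_{V}) \circ \tau_{{\mathcal V}(V,\,W)} = f$, and I let $f \mapsto g$. This is well defined precisely because \deref{u_2} asserts both the existence and the uniqueness of such a $g$.

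Conversely, for the assignment from (2) to (1) I would send a Lie $\mathfrak{h}$-module map $g \colon {\mathcal V}(V,\,W) \to Y$ to the composite $(g \ot {\rm Id}_{V}) \circ \tau_{{\mathcal V}(V,\,W)} \colon W \to Y \ot V$. The single point that needs checking is that this composite is again a morphism of Lie $\mathfrak{g}$-modules. Since $\tau_{{\mathcal V}(V,\,W)}$ is a Lie $\mathfrak{g}$-module map by \deref{u_2}, it is enough to verify that $g \ot {\rm Id}_{V} \colon {\mathcal V}(V,\,W) \ot V \to Y \ot V$ is a Lie $\mathfrak{g}$-module map, where both tensor products carry the structure of \thref{tensor_f}. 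Applying $g \ot {\rm Id}_{V}$ to the defining formula \equref{0.0.2}, namely to $f_{i} \rightharpoonup (l \ot t) = \sum_{j=1}^{n} (e_{j} \curvearrowright l) \ot (x_{ji} \cdot t)$, and then using that $g$ intertwines the $\mathfrak{h}$-actions to pull $g$ through $e_{j} \curvearrowright -$, one recovers $f_{i} \rightharpoonup \bigl(g(l) \ot t\bigr) = f_{i} \rightharpoonup (g \ot {\rm Id}_{V})(l \ot t)$, as required.

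It then remains to show the two assignments are mutually inverse. Beginning with $f$ and passing to the associated $g$, the backward assignment returns $(g \ot {\rm Id}_{V}) \circ \tau_{{\mathcal V}(V,\,W)}$, which is $f$ by the defining equation of $g$. Beginning instead with $g$ and forming $f := (g \ot {\rm Id}_{V}) \circ \tau_{{\mathcal V}(V,\,W)}$, which is a Lie $\mathfrak{g}$-module map by the previous paragraph, the forward assignment returns the unique Lie $\mathfrak{h}$-module map realizing $f$; by construction this is $g$ itself, and here the uniqueness clause of \deref{u_2} is what seals the argument. The only genuinely computational step is the verification in the second paragraph that $g \ot {\rm Id}_{V}$ respects the $\mathfrak{g}$-action; everything else is a formal unwinding of the universal property, so I anticipate no real obstacle beyond that short check.
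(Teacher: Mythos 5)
Your proof is correct and follows exactly the route the paper intends: the corollary is stated there as a direct consequence of the universal property in \deref{u_2}, and your two assignments with the uniqueness clause are precisely that unwinding. Your one computational check, that $g \ot {\rm Id}_{V}$ respects the $\mathfrak{g}$-action of \equref{0.0.2}, is likewise the verification the paper itself relegates to the reader (it is part 2) of \prref{tens_fun}), so nothing is missing.
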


The universal $\mathfrak{h}$-module introduced in \deref{u_2} also exists provided that the $\mathcal{A}$-module $V$ is finite dimensional.

\begin{theorem}\thlabel{exist2}
If $V$ is a finite dimensional $\mathcal{A}$-module then the universal Lie $\mathfrak{h}$-module of $V$ and any other Lie $\mathfrak{g}$-module $W$ exists. 
\end{theorem}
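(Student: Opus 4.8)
The plan is to dualize the argument of \thref{exist1}, interchanging the roles of the finite-dimensional module and of the universal object one builds: this time ${\mathcal V}(V,W)$ must be a Lie $\mathfrak{h}$-module rather than an $\mathcal{A}$-module, so I would realize it as a quotient of a \emph{free} Lie $\mathfrak{h}$-module. Here it is convenient to recall that the category ${}_{\mathfrak{h}}\Ll\Mm$ may be identified with the category of left modules over the universal enveloping algebra $U(\mathfrak{h})$; in particular free Lie $\mathfrak{h}$-modules on an arbitrary set exist and satisfy the expected universal property.

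Concretely, I would fix a basis $\{v_1,\dots,v_p\}$ of the finite-dimensional $\mathcal{A}$-module $V$ and record the structure constants of its action, say $x_{aj}\cdot v_{k}=\sum_{l=1}^{p}\lambda_{aj}^{l,k}\,v_l$, together with a basis $\{w_r\mid r\in K\}$ of $W$ and finite subsets $S_{j,r}\subseteq K$ with $f_j\looparrowright w_r=\sum_{l\in S_{j,r}}\mu_{j,r}^{l}\,w_l$. Letting ${\mathcal F}$ be the free Lie $\mathfrak{h}$-module on the set $\{Y_{kr}\mid k=1,\dots,p,\ r\in K\}$, I would define ${\mathcal V}(V,W)$ to be the quotient of ${\mathcal F}$ by the Lie $\mathfrak{h}$-submodule generated by the elements
\begin{equation*}
\sum_{l\in S_{j,r}}\mu_{j,r}^{l}\,Y_{kl}-\sum_{k'=1}^{p}\sum_{a=1}^{n}\lambda_{aj}^{k,k'}\,(e_a\curvearrowright Y_{k'r}),\qquad k=1,\dots,p,\ j\in I,\ r\in K,
\end{equation*}
writing $y_{kr}$ for the class of $Y_{kr}$, and I would set $\tau_{{\mathcal V}(V,W)}(w_r):=\sum_{k=1}^{p}y_{kr}\ot v_k$. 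The precise shape of these relations is forced by the requirement that $\tau_{{\mathcal V}(V,W)}$ be $\mathfrak{g}$-equivariant: expanding $f_j\rightharpoonup\tau_{{\mathcal V}(V,W)}(w_r)$ by means of \equref{0.0.2} of \thref{tensor_f}, now with ${\mathcal V}(V,W)$ in the role of the Lie $\mathfrak{h}$-module and $V$ in the role of the $\mathcal{A}$-module, and comparing the coefficient of each $v_k$ with that of $\tau_{{\mathcal V}(V,W)}(f_j\looparrowright w_r)$, returns exactly the imposed identities; hence $\tau_{{\mathcal V}(V,W)}$ is a morphism of Lie $\mathfrak{g}$-modules by construction.

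For the universal property I would start from an arbitrary pair $(Y,f)$ with $f\colon W\to Y\ot V$ a Lie $\mathfrak{g}$-module map, expand $f(w_r)=\sum_{k=1}^{p}a_{kr}\ot v_k$ with $a_{kr}\in Y$ (using that $\{v_k\}$ is a basis of $V$), and observe that the very same coefficient comparison — this time enforced by the $\mathfrak{g}$-equivariance of $f$ — shows that the elements $a_{kr}$ satisfy the analogues of the defining relations of ${\mathcal V}(V,W)$. The universal property of ${\mathcal F}$ then yields a unique Lie $\mathfrak{h}$-module map ${\mathcal F}\to Y$ sending $Y_{kr}\mapsto a_{kr}$; this map kills the relation submodule and therefore descends to a Lie $\mathfrak{h}$-module map $g\colon{\mathcal V}(V,W)\to Y$ with $g(y_{kr})=a_{kr}$. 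A direct computation then gives $(g\ot{\rm Id}_{V})\circ\tau_{{\mathcal V}(V,W)}=f$, and uniqueness is immediate, since commutativity of \equref{uni2} already pins down the value of $g$ on every generator $y_{kr}$.

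I expect the difficulties to be bookkeeping rather than conceptual. The two points to handle with care are transcribing the defining relations correctly — the structure constants $\lambda_{aj}^{k,k'}$ of the $\mathcal{A}$-action on $V$ must be paired with the $\mathfrak{h}$-action $e_a\curvearrowright(-)$ sitting on the left-hand tensor factor, with $a$ running over the basis of $\mathfrak{h}$ — and making sure the free object is taken in ${}_{\mathfrak{h}}\Ll\Mm$ rather than in ${}_{\mathcal{A}}\Mm$. Finite-dimensionality of $V$ plays exactly the part that finite-dimensionality of $U$ played in \thref{exist1}: it ensures that $\tau_{{\mathcal V}(V,W)}(w_r)=\sum_{k=1}^{p}y_{kr}\ot v_k$ is a genuine (finite) element of the tensor product, and that each $f(w_r)$ expands as a finite combination $\sum_k a_{kr}\ot v_k$, keeping the relations and the map $g$ well defined.
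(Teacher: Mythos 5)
Your proposal is correct and follows essentially the same route as the paper: the paper also realizes ${\mathcal V}(V,W)$ as the quotient of the free Lie $\mathfrak{h}$-module on generators indexed by a basis of $V$ times a basis of $W$, by the submodule of relations forced by $\mathfrak{g}$-equivariance of $\tau_{{\mathcal V}(V,W)}(w_r)=\sum_{s} y_{rs}\ot v_s$, and then verifies the universal property exactly as in \thref{exist1}. Your defining relations agree with the paper's (after matching index conventions, your $\lambda_{aj}^{k,k'}$ is the paper's $\gamma_{a,j,k'}^{k}$), and your added remark that free objects exist because ${}_{\mathfrak{h}}\Ll\Mm$ is the module category of $U(\mathfrak{h})$ is a point the paper leaves implicit.
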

\begin{proof}
As this proof is somewhat similar in spirit with the one of \thref{exist1}, we will be brief and provide only the main ingredients required for the construction of the universal Lie $\mathfrak{h}$-module.

Let $\{v_1, \cdots, v_l\}$, $l \in \NN^{*}$, be a $k$-basis of the finite dimensional $\mathcal{A}$-module $V$ and denote by $\gamma^{t}_{r,i,j} \in k$ the structure constants of $V$ with respect to its $\mathcal{A}$-module structure $\cdot$, i.e.  for all $r = 1,
\cdots, n$, $i \in I$ and $j = 1, \cdots, l$ we have:
\begin{equation}\eqlabel{const0.1.1}
x_{r i} \,  \cdot \, v_{j}= \sum_{s=1}^l \,
\gamma_{r, i, j}^s \, v_s
\end{equation}
Consider $\{w_{r} ~|~ r \in T\}$  to be a $k$-basis for $W$ and if $\triangleright$ denotes its Lie $\mathfrak{g}$-module structure, then for all $j \in I$ and $r \in T$ we can find a finite subset $S_{j,r}$ of $T$ such that $f_j\, \triangleright\, w_{r} = \sum_{p \in S_{j,r}} \,
\sigma_{j, r}^p \, w_p$, where $\sigma_{j, r}^p \in k$ for all $j \in I$, $r \in T$, and $p \in S_{j,r}$.

Now let ${\mathcal S}(V, W)$ be the free Lie $\mathfrak{h}$-module on the set $\{Y_{r i} ~|~ r \in T,\, i = 1, \cdots, l\}$ and denote by ${\mathcal V}(V, W)$ the quotient of ${\mathcal S}(V, W)$ by its Lie $\mathfrak{h}$-submodule generated by the following elements:
\begin{eqnarray*}
\sum_{p \in
S_{j, r}} \, \sigma_{j, r}^p \, Y_{ps} - \sum_{k = 1}^{l}\sum_{t= 1}^{n}\,
\gamma_{t,j,k}^s \, e_{t} \blacktriangleright Y_{rk} \eqlabel{0.4.4}
\end{eqnarray*}
for all $s = 1, \cdots, l$, $r \in T$ and $j \in I$, where $\blacktriangleright$ denotes the $\mathfrak{h}$-module action on ${\mathcal S}(V, W)$.

By denoting $y_{r i} := \widehat{Y_{r i}}$, where $\widehat{Y_{r i}}$ stands for the equivalence class of ${Y_{r i}}$ in the quotient
module ${\mathcal V}(V, W)$, it follows that the relations below hold in ${\mathcal V}(V, W)$:
\begin{eqnarray*}
\sum_{p \in
S_{j, r}} \, \sigma_{j, r}^p \, y_{ps} = \sum_{k = 1}^{l}\sum_{t= 1}^{n}\,
\gamma_{t,j,k}^s \, e_{t} \blacktriangleright y_{rk}  \eqlabel{0.6.6}
\end{eqnarray*}
for all $s = 1, \cdots, l$, $r \in T$ and $j \in I$.

It can now be easily seen, as in the proof  of \thref{exist1}, that the pair $({\mathcal V} (V, \, W),\, \tau_{{\mathcal V} (V, \, W)})$ is the universal Lie $\mathfrak{h}$-module of $V$ and $W$, where $\tau_{{\mathcal V} (V, \, W)} \colon W \to{\mathcal V} (V, \, W) \ot V$ is the morphism of Lie $\mathfrak{g}$-modules defined for all $r\in T$ as follows:
\begin{eqnarray*}
\tau_{{\mathcal V} (V, \, W)}(w_r) := \sum_{s=1}^l \, y_{rs} \ot v_{s}.
\end{eqnarray*}
\end{proof}

\section{Functors between module categories}\selabel{sect3}

In this section we show that the two universal module constructions previously introduced are functorial and, moreover, if certain conditions are fulfilled the corresponding functors admit right adjoints. We start, however, by stating the following easy consequence of  \thref{tensor_f}:

\begin{proposition}\prlabel{tens_fun}
Let $(U, \curvearrowright) \in {}_{ \mathfrak{h}}\Ll\Mm$ and $(V, \cdot) \in {}_{\mathcal A} \Mm$. Then:
\begin{enumerate}
\item[1)] We have a functor $U \ot - \colon {}_{\mathcal{A}} \Mm \to {}_{\mathfrak{g}}{\mathcal L} \Mm$ from the category of ${\mathcal A}$-modules to the category of Lie $\mathfrak{g}$-modules;\\
\item[2)] We have a functor $ - \ot V \colon {}_{\mathfrak{h}}{\mathcal L} \Mm \to {}_{\mathfrak{g}}{\mathcal L} \Mm$ between the categories of Lie modules over $\mathfrak{h}$ and $\mathfrak{g}$ respectively. 
\end{enumerate}
\end{proposition}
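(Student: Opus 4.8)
The plan is to read both functors off \thref{tensor_f}: on objects they are given by the Lie $\mathfrak{g}$-module structure \equref{0.0.2}, and on morphisms they are obtained by tensoring with the appropriate identity map. The only point that genuinely requires checking is that these tensored maps are morphisms of Lie $\mathfrak{g}$-modules; preservation of identities and of composition will then be inherited for free from the tensor product of linear maps.

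For part 1), I fix $(U,\curvearrowright) \in {}_{\mathfrak{h}}\Ll\Mm$ and send an object $(V,\cdot) \in {}_{\mathcal{A}}\Mm$ to the Lie $\mathfrak{g}$-module $(U \ot V, \rightharpoonup)$ produced by \thref{tensor_f}, while an $\mathcal{A}$-module map $\phi \colon V \to V'$ is sent to ${\rm Id}_{U} \ot \phi$. To see that ${\rm Id}_{U} \ot \phi$ respects the $\mathfrak{g}$-action, I would apply it to the defining formula \equref{0.0.2} and use that $\phi$ is $\mathcal{A}$-linear, i.e. $\phi(x_{ji} \cdot t) = x_{ji} \cdot \phi(t)$:
\[
({\rm Id}_{U} \ot \phi)\bigl(f_i \rightharpoonup (l \ot t)\bigr) = \sum_{j=1}^{n} (e_j \curvearrowright l) \ot (x_{ji} \cdot \phi(t)) = f_i \rightharpoonup \bigl(({\rm Id}_{U} \ot \phi)(l \ot t)\bigr),
\]
for all $i \in I$, $l \in U$ and $t \in V$, which is exactly the required compatibility.

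For part 2), I fix $(V,\cdot) \in {}_{\mathcal{A}}\Mm$ and send a Lie $\mathfrak{h}$-module map $\psi \colon U \to U'$ to $\psi \ot {\rm Id}_{V}$. The verification is the mirror image of the previous one: here the relevant linearity is that of $\psi$, namely $\psi(e_j \curvearrowright l) = e_j \curvearrowright \psi(l)$, which is precisely what lets $\psi$ commute past the $\mathfrak{g}$-action of \equref{0.0.2}, giving $(\psi \ot {\rm Id}_{V})\bigl(f_i \rightharpoonup (l \ot t)\bigr) = f_i \rightharpoonup \bigl((\psi \ot {\rm Id}_{V})(l \ot t)\bigr)$.

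I do not expect any serious obstacle: the whole content is the observation that in \equref{0.0.2} the Lie $\mathfrak{h}$-module $U$ enters only through the first tensor factor and the $\mathcal{A}$-module $V$ only through the second, so that $\mathcal{A}$-linearity (for part 1) and $\mathfrak{h}$-linearity (for part 2) each slot into exactly the factor that the morphism touches. Once the two maps are known to be Lie $\mathfrak{g}$-module morphisms, functoriality follows at once from ${\rm Id}_{U} \ot {\rm Id}_{V} = {\rm Id}_{U \ot V}$ and the bifunctoriality of $\ot$ on vector spaces, which completes the proof.
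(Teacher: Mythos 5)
Your proposal is correct and follows essentially the same route as the paper: both reduce the problem, via \thref{tensor_f}, to checking that ${\rm Id}_{U} \ot \phi$ (resp. $\psi \ot {\rm Id}_{V}$) intertwines the $\mathfrak{g}$-actions of \equref{0.0.2}, which is exactly the computation the paper carries out for part 1) using $\mathcal{A}$-linearity, leaving part 2) to the reader. Your verification of part 2) via $\mathfrak{h}$-linearity is precisely the intended mirror argument, so there is nothing to correct.
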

\begin{proof}
In light of \thref{tensor_f}, we are only left to show that morphisms behave well with respect to the corresponding associative or Lie module structures. We will treat only the first statement and leave the second one to the reader. To this end, consider $(V,\, \cdot)$ and $(V',\, \bullet)$ two $\mathcal{A}$-modules, $\rightharpoonup$ and $\rightharpoonup'$ the corresponding induced Lie $\mathfrak{g}$-module actions via \equref{0.0.2} and $g \colon V \to V'$ a morphism in ${}_{\mathcal{A}} \Mm$ . Then, for all $i \in I$, $l \in U$ and $t \in V$ we have:
\begin{eqnarray*}
&&({\rm Id}_{U} \ot g)\bigl(f_{i} \rightharpoonup (l \ot t)\bigl) \stackrel{\equref{0.0.2}} = \sum_{j=1}^{n}\, (e_{j} \curvearrowright  l) \ot g(x_{ji} \cdot t) = \sum_{j=1}^{n}\, (e_{j} \curvearrowright  l) \ot x_{ji} \bullet g(t)\\
&& \stackrel{\equref{0.0.2}} = f_{i} \rightharpoonup' \bigl(l \ot g(t)\bigl)
\end{eqnarray*}
\end{proof}

\begin{example}
Let $\mathfrak{h} = k$, the $1$-dimensional Lie algebra, and $\mathfrak{g}$ an arbitrary Lie algebra as considered in \equref{const3.4}. Recall from \cite[Examples 2.5, (3)]{am20} that in this case we have $\mathcal{A} = S (\mathfrak{g}/\mathfrak{g}')$, the symmetric algebra of $\mathfrak{g}/\mathfrak{g}'$, where $\mathfrak{g}'$ is the derived subalgebra of $\mathfrak{g}$. Equivalently, we can describe $\mathcal{A}$ as the polynomial algebra on variables $x_{t}$, with $t \in I$, subject to $\sum_{u \in B_{i, j}} \, \beta_{i, j}^u \, x_{u} = 0$, for all $i$, $j \in I$.

Now \prref{tens_fun} yields a fully faithful functor $F \colon {}_{\mathcal{A}}\Mm \to {}_{\mathfrak{g}}{\mathcal L} \Mm$ defined as follows for all $\mathcal{A}$-modules $(V,\, \star)$ and all $\mathcal{A}$-module maps $f$:
\begin{eqnarray*}
F\bigl((V,\, \star)\bigl) = \bigl(V,\, \rightharpoonup^{\star}\bigl), \quad F(f) = f,
\end{eqnarray*}
where $\bigl(V,\, \rightharpoonup^{\star}\bigl)$ is the Lie $\mathfrak{g}$-module defined for all $t \in I$ and $v \in V$ by 
\begin{equation}\eqlabel{ind19}
f_{t} \rightharpoonup^{\star} v = x_{t} \star v
\end{equation}
 $F$ is obviously a faithful functor. Furthermore, let $h \colon \bigl(U,\, \rightharpoonup^{*}\bigl) \to  \bigl(V,\, \rightharpoonup^{\star}\bigl)$ be a morphism in ${}_{\mathfrak{g}}{\mathcal L} \Mm$, where $\rightharpoonup^{*}$ and $\rightharpoonup^{\star}$ are Lie $\mathfrak{g}$-modules induced from $\mathcal{A}$-module structures $(U,\, *)$ and $(V,\, \star)$ respectively as in \equref{ind19}. Then, for all $t \in I$ and $u \in U$, we have $h\bigl(f_{t}\rightharpoonup^{*} u \bigl) = f_{t} \rightharpoonup^{\star} h(u)$ which implies $h(x_{t} * u) = x_{t} \star h(u)$ and therefore $h \colon (U,\, *) \to (V,\, \star)$ is also a morphism in ${}_{\mathcal{A}}\Mm$. This shows that $F$ is also a full functor. 
\end{example}
\newpage
We consider now the universal module functors:

\begin{theorem}\thlabel{functorial}
Let $U$ be a finite dimensional Lie $\mathfrak{h}$-module and $V$ a finite dimensional $\mathcal{A}$-module. 
\begin{enumerate}
\item[(1)] There exists a functor ${\mathcal U}_{U} \colon {}_{\mathfrak{g}}{\mathcal L} \Mm \to {}_{\mathcal A} \Mm$ defined as follows for all Lie $\mathfrak{g}$-modules $X$, $Y$ and all morphisms $f \colon X \to Y$ in ${}_{\mathfrak{g}}{\mathcal L} \Mm$:
\begin{eqnarray*}
{\mathcal U}_{U}(X) = {\mathcal U} (U, \,X), \qquad {\mathcal U}_{U}(f) = \overline{f}
\end{eqnarray*}
where $\overline{f}\colon {\mathcal U} (U, \, X) \to {\mathcal U} (U, \, Y)$ is the unique $\mathcal{A}$-modules morphism which makes the following diagram commutative:
 \begin{eqnarray}\eqlabel{fun_11}
\xymatrix {& X \ar[rr]^-{\rho_{{\mathcal U} (U, \, X)}} \ar[drr]_-{\rho_{{\mathcal U} (U, \, Y)} \circ f}
& {} & {U \ot {\mathcal U} (U, \, X)} \ar[d]^{{\rm Id}_{U} \ot \overline{f}}\\
& {} & {} & {U \ot {\mathcal U} (U, \, Y)}}
\end{eqnarray}
\item[(2)] There exists a functor ${\mathcal V}_{V} \colon {}_{\mathfrak{g}}{\mathcal L} \Mm \to {}_{\mathfrak{h}}{\mathcal L} \Mm$ defined as follows for all Lie $\mathfrak{g}$-modules $X$, $Y$ and all morphisms $f \colon X \to Y$ in ${}_{\mathfrak{g}}{\mathcal L} \Mm$:
\begin{eqnarray*}
{\mathcal V}_{V}(X) = {\mathcal V} (V, \,X), \qquad {\mathcal V}_{V}(f) = \overline{f}
\end{eqnarray*}
where $\overline{f}\colon {\mathcal V} (V, \, X) \to {\mathcal V} (V, \, Y)$ is the unique morphism of Lie $\mathfrak{h}$-modules which makes the following diagram commutative:
 \begin{eqnarray}\eqlabel{fun_22}
\xymatrix {& X \ar[rr]^-{\tau_{{\mathcal V} (V, \, X)}} \ar[drr]_{\tau_{{\mathcal V} (V, \, Y)} \circ f}
& {} & {{\mathcal V} (V, \, X) \ot V} \ar[d]^{\overline{f} \ot {\rm Id}_{V}}\\
& {} &  {} &{{\mathcal V} (V, \, Y) \ot V}}
\end{eqnarray}
\end{enumerate}
\end{theorem}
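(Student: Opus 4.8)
The plan is to derive both functors directly from the universal properties recorded in \deref{u_1} and \deref{u_2}, so that every functoriality statement collapses to a uniqueness argument. I would present part (1) in full and then observe that part (2) is obtained by the evident dualization: replace the universal property of \deref{u_1} (equivalently \coref{c_1}) by that of \deref{u_2} (equivalently \coref{c_2}), and replace the endofunctor ${\rm Id}_U \ot -$ by $- \ot {\rm Id}_V$, so that diagram \equref{fun_11} becomes \equref{fun_22}. The whole argument is formal and relies only on \thref{exist1} and \thref{exist2} for the existence of the universal modules involved.

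First I would check that $\overline{f}$ is well defined. Given a morphism $f \colon X \to Y$ in ${}_{\mathfrak{g}}{\mathcal L}\Mm$, the composite $\rho_{{\mathcal U}(U,Y)} \circ f \colon X \to U \ot {\mathcal U}(U,Y)$ is again a Lie $\mathfrak{g}$-module map, being a composition of two such maps. Hence the pair $\bigl({\mathcal U}(U,Y),\, \rho_{{\mathcal U}(U,Y)} \circ f\bigl)$ is an object of the category described just after \deref{u_1}. Applying the universal property of $\bigl({\mathcal U}(U,X),\, \rho_{{\mathcal U}(U,X)}\bigl)$ to this pair produces a \emph{unique} $\mathcal{A}$-module map $\overline{f}$ rendering \equref{fun_11} commutative; this establishes both the existence and the uniqueness of ${\mathcal U}_U(f)$ in one stroke.

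Next I would verify the two functor axioms, each of which follows from this uniqueness. For the identity, since $\bigl({\rm Id}_U \ot {\rm Id}_{{\mathcal U}(U,X)}\bigr) \circ \rho_{{\mathcal U}(U,X)} = \rho_{{\mathcal U}(U,X)} \circ {\rm Id}_X$, the map ${\rm Id}_{{\mathcal U}(U,X)}$ satisfies the defining relation of $\overline{{\rm Id}_X}$, so uniqueness forces ${\mathcal U}_U({\rm Id}_X) = {\rm Id}_{{\mathcal U}(U,X)}$. For composition, given $f \colon X \to Y$ and $g \colon Y \to Z$ in ${}_{\mathfrak{g}}{\mathcal L}\Mm$, I would show that $\overline{g} \circ \overline{f}$ satisfies the property characterizing $\overline{g \circ f}$. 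Chaining the two commuting instances of \equref{fun_11} and using functoriality of ${\rm Id}_U \ot -$ gives
\begin{eqnarray*}
\bigl({\rm Id}_U \ot (\overline{g} \circ \overline{f})\bigr) \circ \rho_{{\mathcal U}(U,X)} &=& ({\rm Id}_U \ot \overline{g}) \circ ({\rm Id}_U \ot \overline{f}) \circ \rho_{{\mathcal U}(U,X)} \\
&=& ({\rm Id}_U \ot \overline{g}) \circ \rho_{{\mathcal U}(U,Y)} \circ f \\
&=& \rho_{{\mathcal U}(U,Z)} \circ (g \circ f),
\end{eqnarray*}
which is exactly the defining relation of $\overline{g \circ f}$; uniqueness then yields ${\mathcal U}_U(g \circ f) = {\mathcal U}_U(g) \circ {\mathcal U}_U(f)$.

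There is no serious obstacle here: the statement is purely categorical once \thref{exist1} and \thref{exist2} are in hand. The only point deserving a moment of care is the interchange identity ${\rm Id}_U \ot (\overline{g} \circ \overline{f}) = ({\rm Id}_U \ot \overline{g}) \circ ({\rm Id}_U \ot \overline{f})$, which is simply functoriality of $U \ot -$ at the level of vector spaces and is precisely what allows the two universal squares to be composed. Part (2) is then handled verbatim: $\overline{f}$ arises from the universal property of \deref{u_2} applied to $\tau_{{\mathcal V}(V,Y)} \circ f$, and the identity and composition computations transcribe directly with ${\rm Id}_U \ot -$ replaced by $- \ot {\rm Id}_V$ and \equref{fun_11} replaced by \equref{fun_22}.
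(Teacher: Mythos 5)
Your proposal is correct and follows essentially the same route as the paper: define $\overline{f}$ via the universal property of \deref{u_1} applied to the Lie $\mathfrak{g}$-module map $\rho_{{\mathcal U}(U,Y)} \circ f$, then deduce preservation of identities and composition from the uniqueness clause, with part (2) obtained by the evident dualization. In fact your write-up is slightly more complete than the paper's sketch, since you spell out the interchange identity ${\rm Id}_U \ot (\overline{g} \circ \overline{f}) = ({\rm Id}_U \ot \overline{g}) \circ ({\rm Id}_U \ot \overline{f})$ and the chaining of the two commuting squares, which the paper labels as ``obvious.''
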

\begin{proof}
As the result follows in a straightforward manner by a standard category theory argument, we only sketch the proof of the first assertion. Indeed, if $f = {\rm Id}_{X}$ then ${\rm Id}_{{\mathcal U} (U, \, X)}$ is obviously the unique $\mathcal{A}$-modules morphism which makes diagram \equref{fun_11} commute and therefore ${\mathcal U}_{U}({\rm Id}_{X}) = {\rm Id}_{{\mathcal U} (U, \, X)}$. Moreover, if $f \colon X \to Y$ and $g \colon Y \to W$ are two morphisms in ${}_{\mathfrak{g}}{\mathcal L} \Mm$, then $\overline{g} \circ \overline{f} \colon {\mathcal U} (U, \, X) \to {\mathcal U} (U, \, W)$ is obviously the unique $\mathcal{A}$-modules morphism which makes the following diagram commutative:
 \begin{eqnarray*}
\xymatrix {& Z \ar[rr]^-{\rho_{{\mathcal U} (U, \, X)}} \ar[drr]_-{\rho_{{\mathcal U} (U, \, W)} \circ g \circ f}
& {} & {U \ot {\mathcal U} (U, \, X)} \ar[d]^{{\rm Id}_{U} \ot\bigl( \overline{g}\circ \overline{f}\bigl)}\\
& {} & {} & {U \ot {\mathcal U} (U, \, W)}}
\end{eqnarray*}
and we can conclude that ${\mathcal U}_{U}(g \circ f) = {\mathcal U}_{U}(g) \circ {\mathcal U}_{U}(f)$, as desired.
\end{proof}

Under the appropriate finite-dimensionality assumptions, the functors constructed in \prref{tens_fun} are right adjoints to the universal module functors:

\begin{theorem}\thlabel{adjoint_tens}
Let $(U, \curvearrowright)$ be a finite dimensional Lie $\mathfrak{h}$-module and $(V, \cdot)$ a finite dimensional ${\mathcal A}$-module. Then:
\begin{enumerate}
\item[1)] The following functors form an adjunction:
\begin{eqnarray*}
{\mathcal U}_{U} \colon {}_{\mathfrak{g}}{\mathcal L} \Mm \to {}_{\mathcal A} \Mm,\qquad U \ot - \colon {}_{\mathcal A} \Mm \to {}_{\mathfrak{g}}{\mathcal L} \Mm;
\end{eqnarray*}
\item[2)] Similarly, the following functors also form an adjunction:
\begin{eqnarray*}
{\mathcal V}_{V} \colon {}_{\mathfrak{g}}{\mathcal L} \Mm \to {}_{\mathfrak{h}}{\mathcal L} \Mm,\qquad  - \ot V \colon {}_{\mathfrak{h}}{\mathcal L} \Mm \to {}_{\mathfrak{g}}{\mathcal L} \Mm.
\end{eqnarray*}
\end{enumerate}
\end{theorem}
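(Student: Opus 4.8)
The plan is to read both adjunctions off directly from the universal properties already in hand, treating $\rho_{{\mathcal U}(U,-)}$ and $\tau_{{\mathcal V}(V,-)}$ as the respective units. I will carry out part 1) in detail; part 2) is entirely parallel. Since $U$ is finite dimensional, \thref{exist1} guarantees that ${\mathcal U}_{U}(Z) = {\mathcal U}(U,Z)$ exists for every Lie $\mathfrak{g}$-module $Z$, so the functor ${\mathcal U}_{U}$ of \thref{functorial} is defined on all of ${}_{\mathfrak{g}}{\mathcal L}\Mm$. For each Lie $\mathfrak{g}$-module $Z$ and each $\mathcal{A}$-module $X$ I define
\[
\Phi_{Z,X} \colon \Hom_{{}_{\mathcal A}\Mm}\bigl({\mathcal U}(U,Z),\, X\bigr) \longrightarrow \Hom_{{}_{\mathfrak{g}}{\mathcal L}\Mm}\bigl(Z,\, U \ot X\bigr), \qquad \Phi_{Z,X}(g) = ({\rm Id}_{U} \ot g) \circ \rho_{{\mathcal U}(U,Z)}.
\]
That $\Phi_{Z,X}$ is a bijection is exactly the content of \coref{c_1} (equivalently, the universal property of \deref{u_1}): every Lie $\mathfrak{g}$-module map $f \colon Z \to U \ot X$ factors as $f = ({\rm Id}_{U} \ot g)\circ \rho_{{\mathcal U}(U,Z)}$ for a unique $\mathcal{A}$-module map $g$.

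It then remains only to check that $\Phi$ is natural in both arguments, which is formal. Naturality in $X$ follows from the functoriality of $U \ot -$ (\prref{tens_fun}): for a morphism $h \colon X \to X'$ in ${}_{\mathcal A}\Mm$ one has ${\rm Id}_{U} \ot (h \circ g) = ({\rm Id}_{U} \ot h)\circ({\rm Id}_{U} \ot g)$, so $({\rm Id}_{U}\ot h)\circ \Phi_{Z,X}(g) = \Phi_{Z,X'}(h \circ g)$. Naturality in $Z$ is precisely the defining relation of ${\mathcal U}_{U}$ on morphisms: for $p \colon Z \to Z'$ in ${}_{\mathfrak{g}}{\mathcal L}\Mm$, the commutativity of diagram \equref{fun_11} reads $({\rm Id}_{U} \ot \overline{p}) \circ \rho_{{\mathcal U}(U,Z)} = \rho_{{\mathcal U}(U,Z')}\circ p$, whence for any $g \colon {\mathcal U}(U,Z') \to X$,
\[
\Phi_{Z,X}(g \circ \overline{p}) = ({\rm Id}_{U} \ot g)\circ({\rm Id}_{U} \ot \overline{p})\circ \rho_{{\mathcal U}(U,Z)} = ({\rm Id}_{U} \ot g)\circ \rho_{{\mathcal U}(U,Z')}\circ p = \Phi_{Z',X}(g)\circ p.
\]
Hence $\Phi$ is a natural isomorphism $\Hom_{{}_{\mathcal A}\Mm}({\mathcal U}_{U}(-),\,-) \cong \Hom_{{}_{\mathfrak{g}}{\mathcal L}\Mm}(-,\, U \ot -)$, exhibiting ${\mathcal U}_{U}$ as left adjoint to $U \ot -$ with unit $\rho_{{\mathcal U}(U,-)}$ (its component at $Z$ is $\Phi_{Z,{\mathcal U}_{U}(Z)}({\rm Id}) = \rho_{{\mathcal U}(U,Z)}$).

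For part 2), the finite dimensionality of $V$ lets \thref{exist2} supply ${\mathcal V}_{V}(W) = {\mathcal V}(V,W)$ for every Lie $\mathfrak{g}$-module $W$, and I would run the identical argument with the assignment $g \mapsto (g \ot {\rm Id}_{V})\circ \tau_{{\mathcal V}(V,W)}$, using \coref{c_2} for bijectivity and the commutativity of \equref{fun_22} for naturality in $W$; since the fixed tensor factor $V$ now sits on the right, ${\rm Id}_{U} \ot g$ is replaced throughout by $g \ot {\rm Id}_{V}$, and the unit becomes $\tau_{{\mathcal V}(V,-)}$. I expect no genuine obstacle here: the substantive work was already done in \thref{exist1} and \thref{exist2}, where the universal arrows were constructed, and the present statement is the formal translation of ``universal arrow from each object'' into ``left adjoint'' in the sense of \cite{mlane}. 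The one point that deserves a little care is naturality in the source variable, which is not immediate from the universal property of a single object but is exactly what the functoriality established in \thref{functorial} provides.
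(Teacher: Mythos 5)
Your proof is correct and takes essentially the same route as the paper: both use the bijection $\theta \mapsto ({\rm Id}_{U} \ot \theta)\circ\rho_{{\mathcal U}(U,Z)}$ coming from \coref{c_1} (resp.\ \coref{c_2}) and reduce the adjunction to naturality of this bijection in both variables. The only difference is that you actually carry out the naturality checks (via \equref{fun_11} in the source variable and the functoriality of $U \ot -$ from \prref{tens_fun} in the target variable), which the paper leaves to the reader as a ``straightforward diagram chase.''
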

\begin{proof}
$1)$ As pointed out in \coref{c_1}, for all Lie $\mathfrak{g}$-modules $Z$ and all $A$-modules $X$, there is a bijection between ${\rm Hom}_{{}_A\mathcal{M}} \, \bigl(
{\mathcal U}_{U}(Z), \, X\bigl)$ and ${\rm
Hom}_{\mathfrak{g}{\mathcal L} \Mm} \, (Z, \, U \ot X)$ given as follows for all morphisms of ${\mathcal A}$-modules $\theta \colon {\mathcal U}_{U}(Z) \to X$:
\begin{eqnarray*}
&& \Gamma_{Z, X} \colon {\rm Hom}_{{}_{\mathcal A}\mathcal{M}} \, (
{\mathcal U}_{U}(Z), \, X) \to {\rm
Hom}_{\mathfrak{g}{\mathcal L} \Mm} \, (Z, \, U \ot X), \\
&& \Gamma_{Z, X}(\theta) = ({\rm Id}_{U} \ot \theta) \circ \rho_{{\mathcal U}(U,\,Z)}.
\end{eqnarray*}
The desired conclusion now follows by showing that the above bijection is natural in both variables. This can be easily proved by a straightforward diagram chase and is left to the reader.  

$2)$ Using now \coref{c_2}, for all Lie $\mathfrak{g}$-modules $W$ and all Lie $\mathfrak{h}$-modules $Z$, we obtain a bijection between ${\rm Hom}_{\mathfrak{h}{\mathcal L} \Mm} \, \bigl(
{\mathcal V}_{V} (W), \, Z\bigl)$ and ${\rm
Hom}_{\mathfrak{g}{\mathcal L} \Mm} \, (W, \, Z \ot V)$ defined as follows for all morphisms of Lie $\mathfrak{h}$-modules $\theta \colon {\mathcal V}_{V} (W) \to Z$:
\begin{eqnarray*}
&& \Gamma_{W, Z} \colon {\rm Hom}_{\mathfrak{h}{\mathcal L} \Mm} \, \bigl(
{\mathcal V}_{V} (W), \, Z\bigl) \to {\rm
Hom}_{\mathfrak{g}{\mathcal L} \Mm} \, (W, \, Z \ot V), \\
&& \Gamma_{W, Z}(\theta) = (\theta \ot {\rm Id}_{V}) \circ \rho_{{\mathcal V} (V,\,W)}.
\end{eqnarray*}
\end{proof}

In particular, the two pairs of adjoint functors allow us to travel both ways between the representation categories of the two (arbitrary) Lie algebras $\mathfrak{h}$ and $\mathfrak{g}$ and respectively between the representation category of the associative algebra ${\mathcal A}$ and the representation category of the Lie algebra $\mathfrak{g}$. 

\begin{examples}
1) Let $\rho_{i} \colon \mathfrak{g} \ot W_{i} \to W_{i}$ be Lie $\mathfrak{g}$-modules, where $i=1,\,2$. By the colimit preservation property of left adjoints we can easily conclude that if $U$ is a finite dimensional Lie $\mathfrak{h}$-module then ${\mathcal U}_{U}(W_{1} \oplus W_{2})$ is the direct sum of the ${\mathcal A}$-modules ${\mathcal U}_{U}(W_{1})$ and ${\mathcal U}_{U}(W_{2})$. Similarly, for any finite dimensional ${\mathcal A}$-module $V$, ${\mathcal V}_{V}(W_{1} \oplus W_{2})$ is the direct sum of the Lie $\mathfrak{h}$-modules ${\mathcal V}_{V}(W_{1})$ and ${\mathcal V}_{V}(W_{2})$. This can be easily extended to an arbitrary family of representations. 

2) Let $\CC$ be the field of complex numbers and consider the Lie algebra $\mathfrak{sl}(2, \CC)$ with basis $\{e_1, e_2, e_3\}$ and bracket $\left[e_1, \, e_2 \right] = e_3$, $\left[e_3, \, e_2 \right] = -2
e_2$, $\left[e_3, \, e_1 \right] = 2e_1$. As proved in \cite[Examples 2.9, 2.]{am20} we have ${\mathcal A} = {\mathcal A} (\mathfrak{sl}(2, \CC)) \cong \CC[X_{ij} \,
| \, i, j = 1, 2, 3]/J$, where $ \CC[X_{ij} \,
| \, i, j = 1, 2, 3]$ is the usual polynomial algebra and $J$ is the ideal generated by the
following nine universal polynomials of $\mathfrak{sl}(2, \CC)$:
\begin{eqnarray*}
&& \hspace*{-10mm} X_{13} - 2 X_{12}X_{31} + 2X_{11}X_{32}, \,\,\, X_{11} -
X_{11}X_{33} + X_{13}X_{31},\,\,\,  X_{12} - X_{13}X_{32} +
X_{12}X_{33}\\
&& \hspace*{-10mm} X_{23} - 2 X_{21}X_{32} + 2X_{22}X_{31}, \,\,\, X_{21} -
X_{23}X_{31} + X_{21}X_{33},\,\,\, X_{22} - X_{22}X_{33} +
X_{23}X_{32}\\
&& \hspace*{-10mm} X_{33} - X_{11}X_{22} + X_{12}X_{21}, \,\,\, 2X_{31} - X_{21}X_{13}
+  X_{11}X_{23}, \,\,\, 2X_{32} - X_{12}X_{23} + X_{13}X_{22}.
\end{eqnarray*}
In light of \thref{adjoint_tens}, any finite dimensional irreducible $\mathfrak{sl}(2, \CC)$-module $V_{d}$, where $d \in \NN$ and $V_{d}$ is the subspace of $\CC[X,\,Y]$ consisting of homogeneous polynomials in $X$ and $Y$ of degree $d$, induces a pair of adjoint functors relating the module category over the associative algebra ${\mathcal A}$ to the Lie modules category over $\mathfrak{sl}(2, \CC)$. In particular, if $d_{1}$, $d_{2} \in \NN$ then ${\mathcal U}_{V_{d_{1}}}(V_{d_{2}})$ is an ${\mathcal A}$-module.
\end{examples}

\subsection*{Declaration of competing interest}

There is no competing interest.

\end{document}